\newtheorem{theo}{{\sc Theorem}}[section]
\newtheorem{defn}[theo]{{\sc Definition}}
\newtheorem{lem}[theo]{{\sc Lemma}}
\theoremstyle{definition}
\newtheorem{rem}[theo]{{\it Remark}}
\def\blfootnote{\gdef\@thefnmark{}\@footnotetext}
\def\@cite#1#2{[\textbf{#1}\if@tempswa, #2\fi]}
\newcounter{proofpart}
\xpretocmd{\proof}{\setcounter{proofpart}{0}}{}{}
\newcommand{\proofpart}[2]{%
  \par
  \addvspace{\medskipamount}%
  \stepcounter{proofpart}%
  \noindent\emph{#1 \theproofpart: #2}\par\nobreak\smallskip
  \@afterheading
}
\DeclareFontFamily{U}{mathx}{\hyphenchar\font45}
\DeclareFontShape{U}{mathx}{m}{n}{
      <5> <6> <7> <8> <9> <10>
      <10.95> <12> <14.4> <17.28> <20.74> <24.88>
      mathx10
      }{}
\DeclareSymbolFont{mathx}{U}{mathx}{m}{n}
\DeclareMathAccent{\widecheck}{0}{mathx}{"71}
\DeclareMathAccent{\wideparen}{0}{mathx}{"75}
\title{Semiclassical Measures of Eigenfunctions of the attractive Coulomb operator}
\author{Nicholas Lohr}
\address{Northwestern University, Evanston, IL, 60208}
\email{nlohr@math.northwestern.edu}
\date{\today}
\begin{document}
\blfootnote{The author was partially supported by NSF RTG grant
  DMS-2136217.\par
  The author has no competing interests to declare that are relevant to the content of this article.}
\begin{abstract}
We characterize the set of semiclassical measures corresponding to sequences of eigenfunctions of the attractive Coulomb operator $\widehat{H}_{\hbar}\coloneqq-\frac{\hbar^2}{2}\Delta_{\mathbb{R}^3}-\frac{1}{|x|}$. In particular, any Radon probability measure on the fixed negative energy hypersurface $\Sigma_E$ of the Kepler Hamiltonian $H$ in classical phase space that is invariant under the regularized Kepler flow is the semiclassical measure of a sequence of eigenfunctions of $\widehat{H}_{\hbar}$ with eigenvalue $E$ as $\hbar \to 0$. The main tool that we use is the celebrated Fock unitary conjugation map between eigenspaces of $\widehat{H}_{\hbar}$ and $-\Delta_{\mathbb{S}^3}$. We first prove that for any Kepler orbit $\gamma$ on $\Sigma_E$, there is a sequence of eigenfunctions that converge in the sense of semiclassical measures to the delta measure supported on $\gamma$ as $\hbar \to 0$, and we finish using a density argument in the weak-* topology. 
\end{abstract}
\maketitle
\section{Introduction}
In this article, we characterize the semiclassical
measures corresponding to eigenfunctions of the attractive Coulomb operator, defined as
\begin{equation}\label{eq:operator}
\widehat{H}_{\hbar}:L^2(\mathbb{R}^3) \to L^2(\mathbb{R}^3),\quad \widehat{H}_{\hbar}\coloneqq -\frac{\hbar^2}{2}\Delta-\frac{1}{|x|},\quad  \hbar>0.
\end{equation}
This operator is the first approximation of the quantum
hydrogen atom. That is, fixing the reduced mass of the
electron-proton system to $1$, the reduced Bohr radius to
$\hbar^2$, and ignoring all relativistic and spin-coupling
effects, the Schr\"odinger operator for the relative position of
the electron is given by $\widehat{H}_{\hbar}$. It is well-known
that $\widehat{H}_{\hbar}$ is self-adjoint on $L^2(\mathbb{R}^3)$
with domain $H^2(\mathbb{R}^3)$, and the spectrum of
$\widehat{H}_{\hbar}$ is bounded from below (see \cite[Theorem
9.38]{H13} for a proof using the Kato-Rellich theorem and
\cite[Chapter 8, \S 7]{T11} for a different proof using the
Friedrichs method with Hardy's inequality). In fact, the
spectrum of $\widehat{H}_{\hbar}$ decomposes into a negative pure point spectrum and a nonnegative continuous spectrum completely explicitly:
\begin{equation}\label{eq:energy}
\operatorname{spec}\widehat{H}_{\hbar} =\Big\{E_{N}(\hbar) \coloneqq  -\frac{1}{2\hbar^2(N+1)^2} \mid N=0,1,\ldots\Big\}\sqcup [0,\infty).
\end{equation}
Each eigenvalue $E_N(\hbar)$ has multiplicity $(N+1)^2$ (see
\cite[Theorem 18.4, Corollary 18.5]{H13}). 
\par The attractive Coulomb operator corresponds to the
classical phase space Hamiltonian
\begin{equation*}
H:T^*(\mathbb{R}^3\setminus\{0\}) \to \mathbb{R},\quad H(x,\xi)\coloneqq \frac{|\xi|^2}{2}-\frac{1}{|x|},
\end{equation*}
called the Kepler Hamiltonian, where we identify
$T^*(\mathbb{R}^3\setminus\{0\})=\mathbb{R}^3\setminus\{0\}
\times \mathbb{R}^3$ using the Riemannian metric on
$\mathbb{R}^3\setminus\{0\}$. For a fixed energy $E$, the
Hamiltonian orbits, also called Kepler orbits, lie on the energy
hypersurface
\begin{equation*}
\Sigma_E \coloneqq \{(x,\xi) \in T^*(\mathbb{R}^3 \setminus \{0\}) \mid H(x,\xi)=E\}.
\end{equation*}
For any energy $E \in \mathbb{R}$, $\Sigma_E$ is \textit{not} compact due to the $x \to 0, |\xi| \to \infty$ regime.
For $E<0$, the orbits consist of two types: periodic orbits
whose configuration space projections are planar ellipses, and
unbounded ``collision'' orbits whose configuration space projections
are line segments terminating at the origin in finite
time. The configuration space projections of the periodic
Kepler orbits follow Kepler's laws of planetary motion (with one
body fixed and all physical constants fixed to $1$). Namely, the periodic configuration space trajectories
\begin{itemize}
\item are ellipses with the origin fixed at one focus,
\item are such that the line segment connecting the trajectory
to the origin sweeps out equal areas during equal time
intervals,
\item have period $T$ related to the energy $E$ by the
formula
\begin{equation}\label{eq:kepler3}
T=\frac{2\pi}{p_0^3},\quad p_0 \coloneqq \sqrt{-2E},
\end{equation}
\end{itemize}
where we have used our convention on physical constants. Observe
that Kepler’s third law is popularly stated with the length of
the semi-major axis $a$, but, with our conventions, $a=p_0^{-2}$
(see \cite[(5)]{M83} and the very nice expository article
\cite{vHH09}).
\par This Hamiltonian system is not only completely integrable, but it is maximally superintegrable with $5$ independent integrals of motion coming from the components of the
conserved quantities of the Hamiltonian $H$, the angular momentum vector $L$, and
the Runge-Lenz eccentricity vector $R$ defined by
\begin{equation}\label{eq:rungelenz}
L(x,\xi) \coloneqq x \times \xi,\qquad R(x,\xi)=\Big(|\xi|^2-\frac{1}{|x|} \Big)x-(x\cdot \xi)\xi.
\end{equation}
On $\Sigma_E$, the magnitudes of these quantities are related by
the formula
$$|R|^2=1+2E|L|^2. $$
A Kepler orbit is a collision orbit if and only if $L=0$. Provided that $L\neq 0$, in configuration space, $L$ determines the plane of motion, $|R|$ is the eccentricity of the ellipse, $R$ and the foci are colinear, and $|2E|^{-1}$
is the length
of the semi-major axis (as noted previously). The Runge-Lenz vector $R$ has a long, complicated history of discovery and rediscovery (see the works of Goldstein \cite{G75,G76}), but,
most noteworthy, Hamilton in \cite{H47} showed that the Runge-Lenz vector can be understood as coming from the geometry of the momentum space projections of the Kepler orbits,
which miraculously happen to be circles. Each circle has radius $1/|L|$ and is centered at the point obtained by rotating $R/|L|$ by $90^{\circ}$ in the plane of motion (more
carefully, these circles degenerate into lines for the collision orbits). The superintegrability
explains why the bounded orbits are not merely quasi-periodic and confined to invariant tori
as guaranteed from the Liouville-Arnold theorem (see \cite[Chapter 10]{A89}), but the bounded
orbits are genuinely \textit{periodic} (see \cite{GS90} for more on the symmetries of this problem).
\par However, because of the collision orbits, the Hamiltonian
flow of $H$ is \textit{not} complete. In \cite{M70}, Moser
compactified $\Sigma_E$ to a manifold $\overline{\Sigma_E}$
(defined in \eqref{eq:compact}) where the Hamiltonian flow is
regularized by a reflection condition. Roughly speaking, when the collision orbits hit the origin, they are reflected back along the same line, resembling a degenerate ellipse. The manifold $\overline{\Sigma_E}$ is diffeomorphic to $T_1^*( \mathbb{S}^3)$, and, up to a reparametrization, the regularized Hamiltonian flow maps to the cogeodesic flow on $\mathbb{S}^3$. In particular, the collision orbits are mapped to the great circles passing through the `north pole' of $\mathbb{S}^3$. This completes the Hamiltonian flow and extends the collision orbits past their finite collision time to be periodic on all time and obeying Kepler's third law \eqref{eq:kepler3}.
\par For fixed $E<0$ and sequences $\hbar_j \to 0$, $N_j \to \infty$
satisfying $E_{N_j}(\hbar_j)\to E$, we say that a sequence $\Psi_j$
of $L^2$-normalized eigenfunctions of $\widehat{H}_{\hbar_j}$
satisfying
\begin{equation*}
\widehat{H}_{\hbar_j}\Psi_j=E_{N_j}(\hbar_j) \Psi_j
\end{equation*}
converges to a nonnegative Radon measure $\mu$ on $T^*\mathbb{R}^3$ in the sense of semiclassical
measures if, for any $a \in C_c^{\infty}(T^*\mathbb{R}^3)$, we
have
\begin{equation*}
\langle \operatorname{Op}_{\hbar_j}(a)\Psi_{j},\Psi_{j}
\rangle \xrightarrow{j \to \infty }\int_{T^*\mathbb{R}^3}a(x,\xi)
d\mu(x,\xi), 
\end{equation*}
where $\operatorname{Op}_{\hbar}$ denotes semiclassical Weyl
quantization (see \cite[\S E.3.]{DZ19} for more on semiclassical
measures). For any regularized Kepler orbit $\overline{\gamma}$ on $\overline{\Sigma_E}$, there exists sequences $\hbar_j \to 0$, $N_j \to \infty$ satisfying $E_{N_j}(\hbar_j)\to E$ and $L^2$-normalized eigenfunctions $\Psi_{\hbar_j,N_j}^{\gamma}$ of $\widehat{H}_{\hbar_j}$, called Coulomb coherent states,
which satisfy
\begin{equation*}
\widehat{H}_{\hbar}\Psi_{\hbar_j,N_j}^{\gamma}=E_{N_j}(\hbar_j)\Psi_{\hbar_j,N_j}^{\gamma},
\end{equation*}
and converge to the delta measure supported on $\gamma$
in the sense of semiclassical measures (see Definition \ref{def:fergie}). That is, we have the following theorem:
\subsection{Statement of Results}
\begin{theo}\label{theo:1}
Let $E<0$ and $a \in C_c^{\infty}(T^*\mathbb{R}^3)$. If $\overline{\gamma}$
  is a regularized Kepler orbit on the energy hypersurface $\overline{\Sigma_E}$, then there exists sequences $\hbar_j \to 0$, $N_j \to \infty$ satisfying $E_{N_j}(\hbar_j)\to E$ such that 
$$
\langle
\operatorname{Op}_{\hbar_j}(a)\Psi_{\hbar_j,N_j}^{\gamma},\Psi_{\hbar_j,N_j}^{\gamma} \rangle
\xrightarrow{j \to \infty}\frac{p_0^3}{2\pi}
\int_{0}^{2\pi/p_0^3}\overline{a}(\overline{\gamma}(t))dt,
$$
where $p_0 \coloneqq \sqrt{-2E}$, $2\pi/p_0^3$ is the period of the regularized Kepler orbits, $\operatorname{Op}_{\hbar}$
denotes semi-classical Weyl quantization, and $\overline{a}$ is defined in \eqref{eq:barr}.
\end{theo}
Using Theorem \ref{theo:1}, we prove the main result of the
article, Theorem \ref{theo:2}. As noted previously, the Hamiltonian flow on $\Sigma_E$ is \textit{not} complete, so we have to carefully define what it means for a measure on $\Sigma_E$ to be invariant under the Hamiltonian flow. We say that a Radon probability
measure $\mu$ on $\Sigma_E$ is invariant under the Hamiltonian
flow if the pushforward measure $(i_{\Sigma_E})_*\mu$ is invariant under the regularized Hamiltonian flow $\overline{\Xi_H^{\bullet}}$ (defined in \eqref{eq:reghamil}) where $i_{\Sigma_E}:\Sigma_E\to \overline{\Sigma_E}$ is the inclusion map (defined in \eqref{eq:inclusion}).
\begin{theo}\label{theo:2}
Let $E<0$ and let $\mu$ be a Radon probability
measure on $\Sigma_E$ invariant under the Hamiltonian flow. Then
$\mu$ is a semiclassical measure of a sequence $\Psi_{j}$
of eigenfunctions of $\widehat{H}_{\hbar}$. That is, there exists sequences $\hbar_j \to 0$, $N_j \to \infty$ satisfying $E_{N_j}(\hbar_j)\to E$ and $L^2$-normalized $\Psi_j$ such that $\widehat{H}_{\hbar_j}\Psi_{j}=E_{N_j}(\hbar_j)\Psi_{j}$ and
$$\langle \operatorname{Op}_{\hbar_j}(a)\Psi_{j},\Psi_{j} \rangle
\xrightarrow{j \to \infty}\int_{\Sigma_E}a(x,\xi) d\mu(x,\xi),  $$
for any $a \in C_c^{\infty}(T^*\mathbb{R}^3)$.
\end{theo}
\begin{rem}\label{rem:notcompact}
We, again, emphasize that $\Sigma_E$ is \textit{not} compact. The theorem applies to measures $\mu$ not necessarily compactly supported in $\Sigma_E$ (e.g. delta measures supported on collision orbits) with the caveat that the support of $a$ is compact in phase space. All of the collision orbits will leave the support of $a$ as they approach the collision point, and it is of interest to investigate what happens if $a$ has support near $x \to 0,|\xi| \to \infty$. In this case, one has to be careful with the very definition of convergence in the sense of semiclassical measures.
\end{rem}
\begin{rem}\label{rem:higher}
We remark that our methods apply to the analogous statements of Theorems \ref{theo:1} and \ref{theo:2} for the operator $-\frac{\hbar^2}{2}\Delta_{\mathbb{R}^d}-\frac{1}{|x|}$ on $L^2(\mathbb{R}^d)$ for $d \geq 3$. Indeed, the Moser and Fock maps (and, consequently, the Coulomb coherent states) naturally extend to any dimension greater than or equal to three (see \cite{HdL12} for the Moser map, for example). Due to physical relevance and notational convenience, we stick to $d=3$.
  \end{rem}
The strategy of the proofs of these theorems is very similar to
the methods used in \cite{JZ99}. Indeed, for Theorem
\ref{theo:1}, we construct the Coulomb coherent states by
applying Fock's unitary map (the `quantization' of Moser's regularization) to the
highest weight spherical harmonics on the 3-sphere,
$\mathbb{S}^3$, which concentrate on great circles. We then
approximate all invariant measures by convex combinations of
delta measures supported on Kepler orbits.
\par In general, it is hard to characterize the set of all
semiclassical measures $\mu$ for a given operator.  In the non-chaotic setting, the set of semiclassical measures has been completely characterized in a few settings, including the Laplace-Beltrami operator on compact rank-one symmetric spaces \cite{M08} (which includes spheres \cite{JZ99}), space forms \cite{AM10}, the 2-torus \cite{J97}, and, more recently, harmonic oscillators on $\mathbb{R}^d$ \cite{A20,S19,AM22}. These are all examples of completely integrable systems, and asymptotically vanishing perturbations of these and other systems have also been well-studied. The literature is vast in this generality; see the introductions of \cite{A20,AM22} for accounts of the literature. In the case of small, non-asymptotically vanishing perturbations of completely integrable systems
(KAM systems), we have the few recent works of \cite{A20,GH22,G23}.
\par In the chaotic setting, the set of semiclassical measures is \textit{almost} characterized by the quantum ergodicity theorem. One instance of this theorem is the following: if $M$ is
a compact, smooth Riemannian manifold without boundary such that
cogeodesic flow is ergodic with respect to the Liouville measure, then any orthonormal sequences of eigenfunctions of the semiclassical Laplace-Beltrami operator of $M$ with eigenvalue of 1 admit a density-1 subsequence that converges to the Liouville measure in the sense of semiclassical measures as $\hbar \to 0$ (see \cite{S74a,S74b,L93,Z87,CdV85} for the original works and \cite{D22} for an exposition of the results in the chaotic setting). The quantum unique ergodicity conjecture states that it is not necessary to descend to a density-1 subsequence and thus completely characterizes the set of semiclassical measures in this setting, but this conjecture is still open.
\par Sequences of eigenfunctions of $\widehat{H}_{\hbar}$ that
concentrate on classical trajectories were first studied in
\cite{GDB89,N89} and later in \cite{K96,TV-B97}. Theorem
\ref{theo:1} was proved in configuration and momentum space
separately in \cite{TV-B97} for periodic, non-collision
orbits. 
\par The implications of the regularized Hamiltonian flow on the quantum dynamics of Schr\"{o}dinger operators with Coulomb-like potentials has also been well-studied. G\'{e}rard and Knauf in \cite{GK91} showed that the semiclassical wavefront set of time-dependent Schr\"{o}dinger equation solutions $u_{\hbar}(t)=e^{-it \widehat{H}_{\hbar}/\hbar}u_{\hbar,0}, u_{\hbar,0} \in L^2(\mathbb{R}^3)$ propagates along regularized Hamiltonian orbits, including beyond the collision time. Additionally, Keraani in \cite{K05} showed the analogous statement for the propagation of semiclassical measures initially supported away from the origin. These papers regularize the Hamiltonian flow through the Kustaanheimo-Stiefel (KS) transformation. The KS map reduces this three-dimensional Hamiltonian flow to a suitably constrained four-dimensional harmonic oscillator flow (see the original works of \cite{K64,KS65} as well as the book \cite{SS71}), and it is the three-dimensional generalization of the one-dimensional and two-dimensional regularizations of the Kepler problem known to Euler \cite{E74} and Levi-Civita \cite{LC20}, respectively. Although the KS transformation has proven to be a powerful tool as exhibited in the aforementioned \cite{GK91,K05} and other work such as \cite{CJK08}, it has several drawbacks. The inverse KS map is only locally defined via introducing a dummy variable defined on the circle, the KS map also has no obvious generalization to dimensions higher than three, and, to the author's knowledge, it has no obvious `quantization' that relates the spectrum of the four-dimensional harmonic oscillator to that of the Coulomb operator. We note that the unitary Fock map has a satisfactory answer to these three defects, and we use these additional properties in this article.
\par The point of this article is to first generalize the concentration results in \cite{TV-B97} to phase space in Theorem \ref{theo:1}. By specializing the potential to be exact Coulomb and utilizing the Moser and Fock maps, we analyze the singularity at the origin through states concentrating on the collision orbits, and we use
this to characterize all of the semiclassical measures of
eigenfunctions of $\widehat{H}_{\hbar}$, which complements the existing results of \cite{K05}. 
\subsection{Future Work}
In future work, we plan to study the finer pointwise asymptotics of the Wigner distributions of the Coulomb coherent states in a similar fashion as in \cite{L23}. We also plan on studying asymptotically vanishing perturbations of the Coulomb system, similarly to what has been done for the sphere \cite{M09,MR19}, Zoll manifolds \cite{M08,MR16}, and harmonic oscillators \cite{AM22}.
\subsection{Acknowledgments}
This article is part of the Ph.D. thesis of the author at Northwestern University under the guidance of Steve Zelditch. The author thanks Jared Wunsch for continued conversations and support after the passing of Steve Zelditch. The author also thanks Erik Hupp, Ruoyu P. T. Wang, and Jeff Xia for helpful conversations, as well as the very thorough and thoughtful anonymous referee.
\subsection{Background: Classical and Quantum Mechanical Mappings between Coulomb and spherical dynamics}
In this section, we introduce the relevant classical and quantum mechanical maps that are involved with this problem. For completeness, we reproduce proofs of basic facts about these maps, and further properties and generalizations to $\mathbb{R}^d$ can be found in \cite{M70,HdL12} for the Moser map and \cite{F35,BI66,RC21} for the Fock map.
 \subsubsection{The Classical Mechanical Moser Map}
 In this section, we define the classical Moser map, first defined by Moser in \cite{M70} (see \cite{HdL12} for an overview). This map regularizes the incomplete Kepler flow by mapping the (regularized) Hamiltonian orbits on a compactified $\Sigma_E$ to the geodesics of $T_1^*\mathbb{S}^3$.
 We use the notation
 $$\mathbb{S}_{\neq \mathsf{NP}}^3\coloneqq \mathbb{S}^3\setminus \{\mathsf{NP}\},\quad \mathsf{NP} \coloneqq (0,0,0,1),$$
 to denote the sphere punctured at the `north pole.'
Let $\omega: \mathbb{R}^3 \to \mathbb{S}_{\neq \mathsf{NP}}^3$
be inverse of stereographic projection from the north pole. That
is, the maps $\omega: \mathbb{R}^3 \to \mathbb{S}_{\neq \mathsf{NP}}^3$ and $\omega^{-1}:\mathbb{S}_{\neq \mathsf{NP}}^3\to \mathbb{R}^3$ are given by
\begin{equation}\label{eq:stereo}
\omega(x) \coloneqq \frac{1}{|x|^2+1}\begin{cases}
2x_k & \text{if }k<4\\

|x|^2-1 &\text{if }k=4
\end{cases}, \quad \omega^{-1}(u)_j=\frac{u_j}{1-u_4},\ j=1,2,3.
\end{equation}
It can be easily computed that the pullback $\omega^*:T^*\mathbb{R}^3 \to T^*(\mathbb{S}_{\neq \mathsf{NP}}^3)$ is
\begin{align}
\omega^*(x,\xi)=(\omega(x),\eta) \quad \text{with} \quad \eta_j =\begin{cases}
           \xi_j \frac{|x|^2+1}{2}-(x\cdot \xi) x_j &\text{ if }
                                               j<4\\
           x \cdot \xi &\text{ if }
                                               j=4
                                                                  \end{cases}\label{eq:mos},
\end{align}
where we have identified $T^* \mathbb{R}^3\cong T \mathbb{R}^3=\mathbb{R}_x^3 \times \mathbb{R}_{\xi}^3$ and $T^*(\mathbb{S}_{\neq \mathsf{NP}}^3)\cong T(\mathbb{S}_{\neq \mathsf{NP}}^3)\subset T \mathbb{R}^4=\mathbb{R}_u^4 \times \mathbb{R}_{\eta}^4$ with the musical isomorphisms induced by the respective Riemannian metrics.
          \begin{defn}\label{def:moser} Let $E<0$ and define $p_0 \coloneqq
          \sqrt{-2E}$. Define the Moser map
          \begin{equation*}
\mathcal{M}_E:T^*\mathbb{R}^3 \to T^*(\mathbb{S}_{\neq \mathsf{NP}}^3),\quad \mathcal{M}_E \coloneqq   \omega^* \circ R_{-\pi/2}\circ S 
          \circ \mathcal{D}_{p_0}
\end{equation*}
 where $\mathcal{D}_{p_0}(x,\xi) \coloneqq (p_0 x
          ,p_0^{-1}\xi)$ is the symplectic dilation by $p_0$,
          $R_{-\pi/2}(x,\xi) \coloneqq (\xi,-x)$ is the symplectic rotation by $-\pi/2$, and $S(x,\xi) \coloneqq (p_0x,\xi)$ is a nonsymplectic dilation. Using
          \eqref{eq:mos}, we can write $\mathcal{M}_E$ explicitly
          as 
          \begin{align}
\mathcal{M}_E(x,\xi)=\big(\omega(p_0^{-1}\xi),\eta\big) \quad \text{where} \quad \eta_j =\begin{cases}
           -x_j \frac{|\xi|^2+p_0^2}{2}+(x\cdot \xi) \xi_j &\text{ if }
                                               j<4\\
           -p_0(x \cdot \xi) &\text{ if }
                                               j=4
                                                                                               \end{cases}.\label{eq:mos2}
          \end{align}
          The inverse $\mathcal{M}_E^{-1}:T^*(\mathbb{S}_{\neq \mathsf{NP}}^3)\to T^*\mathbb{R}^3$ is given by
          \begin{align}
\mathcal{M}_E^{-1}(u,\eta)=(x,p_0\omega^{-1}(u) ) \quad \text{where} \quad x_k=\tfrac{1}{p_0^2}\big(\eta_k(u_4-1)-\eta_4u_k\big) \quad \text{for } k=1,2,3.
\end{align}
\end{defn}
\begin{rem}
One can compute
\begin{equation}\label{eq:sympl}
\mathcal{M}_E^* \Big(\sum_{k=1}^4du_k
\wedge d\eta_k \Big)=p_0\sum_{k=1}^3 dx_k \wedge d\xi_k
\end{equation}
where $\sum_{k=1}^4du_k\wedge d\eta_k$ denotes the symplectic form on $T^*\mathbb{R}^4$ restricted to $T^*(\mathbb{S}_{\neq \mathsf{NP}}^3)$. Additionally, the functions $u_j\eta_k-u_k\eta_j$ on $T^*(\mathbb{S}_{\neq \mathsf{NP}}^3)$ pulled back by $\mathcal{M}_E$ can be computed as
\begin{align}
\mathcal{M}_E^*(u_j\eta_k-u_k\eta_j)&=p_0(x_j\xi_k-x_k\xi_j),\quad
                          j,k \neq 4, \label{eq:preang}\\
  \mathcal{M}_E^*(u_j\eta_4-u_4\eta_j)&=\frac{|\xi|^2-p_0^2}{2}x_j-(x \cdot \xi)\xi_j,\quad
                          j \neq 4 \label{eq:prelenz}.
\end{align}
That is, \eqref{eq:preang} states that $\mathcal{M}_E$ pulls back the components of angular
momentum not involving the fourth coordinate in $\mathbb{R}^4$
to \textit{all} the (scaled) components of angular momentum in
$\mathbb{R}^3$. Put differently, for $g \in
\operatorname{SO}(3)$, we have
\begin{equation}\label{eq:later}
\mathcal{M}_E \circ g^*=\begin{pmatrix} g & 0\\ 0 & 1\end{pmatrix}^* \circ \mathcal{M}_E,
\end{equation}
where the asterisk denotes the symplectic lift of the rotation action on the base manifold to the cotangent bundle. 
\par To further understand \eqref{eq:prelenz}, we
first observe that one can check
$\mathcal{M}_E|_{\Sigma_E}=T_1^*(\mathbb{S}_{\neq
  \mathsf{NP}}^3)$. On $\Sigma_E$, the right hand side of \eqref{eq:prelenz} coincides with the components of $R$ (see \eqref{eq:rungelenz}). Finally, it is worth emphasizing that the Moser map \textit{crucially} depends on the energy level $E$.
\end{rem}
\begin{theo}[\cite{M70}, Theorem 1]\label{theo:moser} Fix $E<0$.
Up to a reparametrization of time, the Moser map $\mathcal{M}_E$ transforms the Kepler flow on $\Sigma_E$ onto the cogeodesic flow on $T_1^*(\mathbb{S}_{\neq \mathsf{NP}}^3)$ parametrized by arc length. More specifically, 
if $\gamma(t)=(x(t),\xi(t)) \in T^*\mathbb{R}^3$ is a Kepler orbit on $\Sigma_E$, then $\varphi(s)=(u(s),\eta(s)) \coloneqq \mathcal{M}_E(\gamma(t(s)) \in T_1^*(\mathbb{S}_{\neq \mathsf{NP}}^3)$ is a 
cogeodesic on $T_1^*(\mathbb{S}_{\neq \mathsf{NP}}^3)$ parametrized by arc length $s$ where
$t(s)$ satisfies
\begin{equation}\label{eq:timechange}
\frac{dt}{ds}=\frac{|x(t(s))|}{p_0}=\frac{1-u_4(s)}{p_0^3},\quad t(0)=0.
\end{equation}
\end{theo}
\begin{rem} Note that $t(s)$ is strictly increasing since $t'(s)> 0$. In fact, if we view \eqref{eq:timechange} as a differential equation defined on all $s \in \mathbb{R}$, then $t(s)$ is increasing since $t'(s)=0$ only at the discrete, periodic points $s$ where $u_4(s)=1$. If we integrate both sides of \eqref{eq:timechange} from $s=0$ to $2\pi$, we recover Kepler's third law \eqref{eq:kepler3} since $u_4(s)=a \cos s +b \sin s$ for some constants $a,b$.
\end{rem}
\begin{rem}\label{rem:collision}
As noted in the introduction proceeding \eqref{eq:rungelenz}, a Kepler orbit is a collision orbit if and only if the angular momentum vector $L=0$. In this case, by \eqref{eq:preang}, we see that the corresponding geodesic on $\mathbb{S}_{\neq \mathsf{NP}}^3$ has zero angular momentum in the directions not involving the fourth coordinate. That is, the collision Kepler orbits correspond to the great circle geodesics terminating at $\mathsf{NP}$, the north pole. If we let $\gamma$ be a collision Kepler orbit, we define $t_{\gamma}$ to be the time at which $\gamma$ blows-up. The Kepler orbit $\gamma$ is defined only on the interval $(t_{\gamma}-\frac{2\pi}{p_0^3},t_{\gamma})$, and Moser's regularization continues $\gamma$ to be $\frac{2\pi}{p_0^3}$ periodic on $\mathbb{R}$ by continuing the corresponding great circle geodesic past the north pole termination point. See Definition \ref{def:moser2} and the proceeding remarks for more rigor.
\end{rem}
\begin{proof}
Let $\mathcal{M}_E(x,\xi)=(u,\eta)$. From \eqref{eq:mos2}, one can
compute
\begin{equation}\label{eq:relate}
\frac{1}{2}|\eta|^2=\frac{|x|^2(|\xi|^2+p_0^2)^2}{8}.
\end{equation}
On $T^*(\mathbb{S}_{\neq \mathsf{NP}}^3)$, define $K(u,\eta) \coloneqq \frac{1}{2}|\eta|^2$. Note that the
Hamiltonian flow of $K$ on the level hypersurface
$\{K=\frac{1}{2}\}$ is the cogeodesic flow on $T_1^*(\mathbb{S}_{\neq \mathsf{NP}}^3)$ parametrized by arc
length time $s$. By \eqref{eq:relate}, the
Hamiltonian orbits of $$F(x,\xi) \coloneqq
\frac{|x|^2(|\xi|^2+p_0^2)^2}{8}$$ on the level
hypersurface $\{F=\frac{1}{2}\}$ parametrized in time parameter $t'$ are images under
$\mathcal{M}_E^{-1}$ of the Hamiltonian orbits of $K$ on the level
hypersurface $\{K=\frac{1}{2}\}$ parametrized by arc length $s$ where
$$\frac{dt'}{ds}=\frac{1}{p_0}. $$
Define
$$G(x,\xi)=\sqrt{2
  F(x,\xi)}-1=\frac{|x|(|\xi|^2+p_0^2)}{2}-1.$$
It is easy to see that the Hamiltonian flow of $F$ on the level
hypersurface $\{F=\frac{1}{2}\}$ is
equivalent to the Hamiltonian flow of $G$ on the level
hypersurface $\{G=0\}$. Finally, note that
$$H(x,\xi)=\frac{1}{|x|}G(x,\xi)-\frac{p_0^2}{2}. $$
Again, it is easy to see that the Hamiltonian flow of $G$ on the
level hypersurface $\{G=0\}$ in the time parameter $t'$ is
equivalent to the Hamiltonian flow of $H$ on
$\{H=-\frac{p_0^2}{2}=E\}$ in the time parameter $t$ where
$\frac{dt}{dt'}=|x(t(t'))|$. Altogether, we have
$$\frac{dt}{ds}=\frac{dt}{dt'}\frac{dt'}{ds}=\frac{|x(t(s))|}{p_0}=\frac{1}{p_0}\frac{2}{|\xi(t(s))|^2+p_0^2}=\frac{1}{p_0}\frac{2}{|p_0\omega^{-1}(u(s))|^2+p_0^2}=\frac{1-u_4(s)}{p_0^3}, $$
and we are done.
\end{proof}
Moser's regularization adds the point $\mathsf{NP}$ to
$T^*(\mathbb{S}_{\neq \mathsf{NP}}^3)$ and thus compactifies
$\Sigma_E$. In order to do this rigorously, we `patch' the
behavior at the south pole to the north pole. Defining
$\mathsf{SP} \coloneqq -\mathsf{NP}=(0,0,0,-1)$, observe the
diagram
\begin{equation}\label{eq:newdiagram}
\begin{tikzcd}
	{(T^*\mathbb{R}^3)\setminus 0} & {T^*(\mathbb{S}_{\neq \mathsf{SP},\mathsf{NP}}^3)} \\
	{(T^*\mathbb{R}^3)\setminus 0} & {T^*(\mathbb{S}_{\neq \mathsf{SP},\mathsf{NP}}^3)}
	\arrow["{\mathcal{M}_E}", from=1-1, to=1-2]
	\arrow["{\mathcal{I}_E}"', from=1-1, to=2-1]
	\arrow["{\mathcal{N}}", from=1-2, to=2-2]
	\arrow["{\mathcal{M}_E}", from=2-1, to=2-2]
  \end{tikzcd}\end{equation}
  commutes, where
  \begin{align*}
    \mathcal{N}(u,\eta) \coloneqq &\ (-u,-\eta),\qquad \mathcal{I}_E
                                \coloneqq \mathcal{D}_{p_0^{-2}}\circ
                     R_{-\pi/2}\circ \iota^* \circ
                                R_{-\pi/2},\\
    \iota(x) \coloneqq &\ \frac{x}{|x|^2},\qquad \iota^*(x,\xi) \coloneqq \Big(\frac{x}{|x|^2},|x|^2\xi-2(x\cdot \xi)x\Big).
\end{align*}
Explicitly,
\begin{equation*}
\mathcal{I}_E(x,\xi)=\Big(p_0^{-2}\big(-|\xi|^2x+2(x\cdot \xi)\xi \big) ,-p_0^2\frac{\xi}{|\xi|^2}\Big).
\end{equation*}
It is easy to see from \eqref{eq:newdiagram} that $\mathcal{I}_E$ is an involution and it takes the set $\Sigma_E \setminus \{(x,0) \colon
|x|=2p_0^{-2}\}$ to itself. Now we define the compactification of $\Sigma_E$:
\begin{equation}\label{eq:compact}
\overline{\Sigma_E} \coloneqq (\Sigma_E^{(0)} \sqcup \Sigma_E^{(1)})/\sim, \qquad  \big(\mathcal{I}_E(x,\xi),0\big)\sim\big((x,\xi),1\big)\text{ for } \xi \neq 0.
\end{equation}

\begin{defn}\label{def:moser2}
For $E<0$, define the regularized Moser map $\overline{\mathcal{M}_E}:\overline{\Sigma_E}\to
T_1^*\mathbb{S}^3$ by
\begin{equation}\label{eq:compactmoser}
\begin{aligned}
\overline{\mathcal{M}_E}\big( (x,\xi),0 \big) &\coloneqq
                                                \mathcal{M}_E(x,\xi),\\
  \overline{\mathcal{M}_E}\big( (x,\xi),1 \big) &\coloneqq
  \mathcal{M}_E\big(\mathcal{I}_E(x,\xi)\big),\text{ when }\xi \neq 0,\\
  \overline{\mathcal{M}_E}\big((x,0),1\big) &\coloneqq
  \big(\mathsf{NP},(2^{-1}p_0^2x,0)\big).
  \end{aligned}
  \end{equation}
  \end{defn}
  \begin{rem}\label{rem:moser2}
One can show $\overline{\mathcal{M}_E}$ is a smooth diffeomorphism, and we
can then define the regularized Hamiltonian flow on
$\overline{\Sigma_E}$. Indeed, for any $t \in \mathbb{R}$,
define $\overline{\Xi_H^t}:\overline{\Sigma_E}\to
\overline{\Sigma_E}$ by
\begin{equation}\label{eq:reghamil}
\overline{\Xi_H^t} \coloneqq \overline{\mathcal{M}_E}^{-1}\circ \Phi_{\mathbb{S}^3}^{s(t)}\circ  \overline{\mathcal{M}_E},
\end{equation}
where $\Phi_{\mathbb{S}^3}^{\bullet}$ denotes the cogeodesic
flow on $T_1^*\mathbb{S}^3$ and $s(t)$ is the inverse of $t(s)$
defined in \eqref{eq:timechange}. Define the inclusion
\begin{equation}\label{eq:inclusion}
i_{\Sigma_E}:\Sigma_E \to \overline{\Sigma_E},\quad i_{\Sigma_E}(x,\xi)=\big((x,\xi),0\big),
\end{equation}
If $(x,\xi) \in \Sigma_E$ is
on a non-collision orbit,
it is easy to see from definitions and Theorem \ref{theo:moser} that
$$\overline{\Xi_H^t}\big((x,\xi),0\big)=
i_{\Sigma_E}\big(\Xi_H^t(x,\xi)\big)$$
for any $t \in \mathbb{R}$, where $\Xi_H^t$ is the
non-regularized Hamiltonian flow.
\end{rem}
\begin{rem}\label{rem:bars}
On the other hand, if
$a \in C_c(\Sigma_E)$, we can extend it to a continuous function $\overline{a}\in
C(\overline{\Sigma_E})$ defined by
\begin{equation}\label{eq:barr}
\begin{aligned}
\overline{a}\big( (x,\xi),0 \big) &\coloneqq
                                               a(x,\xi),\\
  \overline{a}\big( (x,\xi),1 \big) &\coloneqq
  a\big(\mathcal{I}_E(x,\xi)\big),\text{ when }\xi \neq 0,\\
  \overline{a}\big((x,0),1\big) &\coloneqq
 0.
\end{aligned}
\end{equation}
The function $\overline{a}$ is continuous since $\lim_{\xi \to
  0}\overline{a}((x,\xi),1)=0$, which occurs since
$\mathcal{I}_E(x,\xi)$ eventually leaves the support of $a$ as
$\xi \to 0$.
 \par We finally remark that the space of Kepler orbits on
 $\Sigma_E$, $\mathcal{H}(\Sigma_E) \coloneqq \Sigma_E /\sim$
 where $\sim$ denotes equivalence of points on the same orbit,
 is the \textit{same} as the space of regularized Kepler orbits
 $\mathcal{H}(\overline{\Sigma_E}) \coloneqq
 \overline{\Sigma_E}/\sim$. Indeed, these two spaces correspond
 under Moser's regularization to the spaces
 $T_1^*\mathbb{S}_{\neq \mathsf{NP}}^3$ and $T_1^*\mathbb{S}^3$
 quotiented out by points on the same cogeodesic,
 respectively. These two spaces are the same since
 $(\mathsf{NP},\eta)$ is on the same cogeodesic as
 $(\mathsf{SP},-\eta)$. In other words, it doesn't matter if we
 include or exclude the north pole since we are identifying
 points on the same cogeodesic. 
 \par If $\gamma \in \mathcal{H}(\Sigma_E)$, we define
 $\overline{\gamma} \in \mathcal{H}(\overline{\Sigma_E})$ as the
 regularized Kepler orbit starting at
 $i_{\Sigma_E}(\gamma(0))$. With $ a\in C_c(\Sigma_E)$ and
 $\overline{a}\in C(\overline{\Sigma_E})$ defined in
 \eqref{eq:barr},
\begin{align}
\int_{t_{\gamma}-2\pi/p_0^3}^{t_{\gamma}}a(\gamma(t))dt&=\int_{t_{\gamma}-2\pi/p_0^3}^{t_{\gamma}}\overline{a}(\overline{\gamma}(t))dt=\int_0^{2\pi/p_0^3}\overline{a}(\overline{\gamma}(t))dt\quad
                    \text{for
                                                                 }
                                                                 \gamma
                                                                 \text{
                                                                 a
                                                                 collision
                                                                 orbit},\label{eq:seelater}\\
  \int_{0}^{2\pi/p_0^3}a(\gamma(t))dt&=\int_0^{2\pi/p_0^3}\overline{a}(\overline{\gamma}(t))dt\quad \text{for } \gamma \text{ a noncollision orbit},\nonumber                                                                 
\end{align}
where $t_{\gamma}$ is the collision time of $\gamma$ (defined in Remark \ref{rem:collision}).

\end{rem}
\subsubsection{The Quantum Mechanical Fock Map} In this section,
we define the Fock map, first defined by Fock in \cite{F35} (see
\cite{BI66,RC21} for overviews). The Fock map is the
`quantization' of the Moser map. For every
$\hbar>0,N=0,1,2,\ldots$, we define the eigenspace
\begin{equation}\label{eq:eigenspace}
\mathcal{E}_{\widehat{H}_{\hbar}}(\hbar,N) \coloneqq \{ \psi \in H^2(\mathbb{R}^3) \mid \widehat{H}_{\hbar}\psi=E_N(\hbar)\psi\},
\end{equation}
where $\widehat{H}_{\hbar}$ and $E_N(\hbar)$ are defined in
\eqref{eq:operator} and \eqref{eq:energy}, respectively. A priori, elliptic regularity gives $\mathcal{E}_{\widehat{H}_{\hbar}}(\hbar,N) \subset C^{\infty}(\mathbb{R}^3 \setminus \{0\})$. As
noted before, the dimension of $\mathcal{E}_{\widehat{H}_{\hbar}}(\hbar,N)$ is
$(N+1)^2$, and a basis can be found by writing
$\widehat{H}_{\hbar}$ in polar coordinates and separating
the variables $r \geq 0$ and $\theta \in \mathbb{S}^2$
(see, for example, \cite[Theorem 18.3]{H13}). Explicitly, a
basis is given by
\begin{equation}\label{eq:basis}
\psi_{\hbar,N,\ell}^{m}(x) \coloneqq
C_{\hbar,N,\ell}e^{-\frac{1}{\hbar^2(N+1)}|x|}|x|^{\ell}L_{N-\ell}^{(2\ell+1)}\Big(\frac{2}{\hbar(N+1)^2}|x|
\Big)Y_{\ell}^m(\widehat{x}),
\end{equation}
where $\widehat{x}\coloneqq \frac{x}{|x|},\ell \in \{0,\ldots,N\},m \in \{-\ell,\ldots,\ell\},$
$C_{\hbar,N,\ell}$ is a normalization constant to make $\lVert
\psi_{\hbar,N,\ell}^m \rVert_{L^2}=1$, $L_{N-\ell}^{(\bullet)}$
are the generalized Laguerre polynomials of degree $N-\ell$, and
$Y_{\ell}^m$ are the spherical harmonics on $\mathbb{S}^2$ of
degree $\ell$ and order $m$. In particular,
\begin{equation}\label{eq:refagain}
\mathcal{E}_{\widehat{H}_{\hbar}}(\hbar,N) \subset \big(C_c(\mathbb{R}^3) +\mathcal{S}(\mathbb{R}^3)\big) \cap C^{\infty}(\mathbb{R}^3 \setminus \{0\}).
\end{equation}
\par Before we define the Fock map, we analyze $\mathcal{E}_{\widehat{H}_{\hbar}}(\hbar,N)$ in Fourier space. By \eqref{eq:refagain}, the Fourier transform of $\mathcal{E}_{\widehat{H}_{\hbar}}(\hbar,N)$ is contained in $C^{\infty} \cap L^2$. For every $\psi \in \mathcal{E}_{\widehat{H}_{\hbar}}(\hbar,N)$ and for any $\xi \in \mathbb{R}^3$ \begin{align}\label{eq:closeha}
\Big(\frac{|\xi|^2}{2}+\frac{1}{2\hbar^2(N+1)^2} \Big) \mathcal{F}_{\hbar}[\psi](\xi)=\frac{1}{2\pi^2\hbar}\int_{\mathbb{R}^3}\frac{\mathcal{F}_{\hbar}[\psi](p)}{|p-\xi|^2}dp,
\end{align}
where $\mathcal{F}_{\hbar}[\psi](\xi)\coloneqq (2\pi
\hbar)^{-3/2}\int_{\mathbb{R}^3}\psi(v)e^{-i \frac{v\cdot
    \xi}{\hbar}}dv$ is the semiclassical Fourier
transform. This is because
$\mathcal{F}_{\hbar}[|\bullet|^{-1}]=\frac{1}{\pi}\cdot \frac{\sqrt{2\pi \hbar}}{|\bullet|^2}$ and $\mathcal{F}_{\hbar}[f \cdot
  g]=(2\pi \hbar
  )^{-3/2}\mathcal{F}_{\hbar}[f]*\mathcal{F}_{\hbar}[g]$. Define the dilation operator
\begin{equation}\label{eq:DIE}
 \widehat{\mathcal{D}}_{\frac{1}{\hbar(N+1)}}[f] \coloneqq \Big(\frac{1}{\hbar(N+1)} \Big)^{3/2}f\Big(\frac{\bullet}{\hbar(N+1)}\Big).
\end{equation}
We apply $\widehat{\mathcal{D}}_{\frac{1}{\hbar(N+1)}}$ on both sides
of \eqref{eq:closeha} and see that
\begin{align}
  \frac{|\xi|^2+1}{2}(\widehat{\mathcal{D}}_{\frac{1}{\hbar(N+1)}}\circ
  \mathcal{F}_{\hbar})[\psi](\xi) &=\frac{N+1}{2\pi^2}\int_{\mathbb{R}^3}\frac{(\widehat{\mathcal{D} }_{\frac{1}{\hbar(N+1)}}
\circ
                                   \mathcal{F}_{\hbar})[\psi](p)}{|p-\xi|^2}dp. \label{eq:lilwayne}
\end{align}
With $\omega$ defined in \eqref{eq:stereo}, recall that the pullback of the
Euclidean sphere measure $d\Omega$ under $\omega$ is
\begin{equation}\label{eq:measure}
\omega^*d\Omega=\Big(\frac{2}{|p|^2+1} \Big)^3dp.
\end{equation}
Also recall that stereographic projection distorts distances by
the formula
\begin{equation}\label{eq:distort}
|p-\xi|^2=\frac{(|p|^2+1)(|\xi|^2+1)}{4}|\omega(p)-\omega(\xi)|^2.
\end{equation}
We now perform the change of variables of $\xi=\omega^{-1}(u)$ and
$p=\omega^{-1}(y)$ to \eqref{eq:lilwayne}. By \eqref{eq:measure}, we have for any $u \in \mathbb{S}^3$
\begin{align}
  &\frac{|\omega^{-1}(u)|^2+1}{2}(\widehat{\mathcal{D} }_{\frac{1}{\hbar(N+1)}}\circ
  \mathcal{F}_{\hbar})[\psi](\omega^{-1}(u))\nonumber \\
  &\qquad \qquad =\frac{N+1}{2\pi^2}\int_{\mathbb{S}^3}\frac{( \widehat{\mathcal{D}}_{\frac{1}{\hbar(N+1)}}\circ \mathcal{F}_{\hbar})[\psi](\omega^{-1}(y))}{|\omega^{-1}(u)-\omega^{-1}(y)|^2}\Big(\frac{|\omega^{-1}(y)|^2+1}{2}\Big)^3d\Omega(y), \nonumber
\end{align}
which, by \eqref{eq:distort}, implies 
\begin{align}
  &\Big(\frac{|\omega^{-1}(u)|^2+1}{2}\Big)^2(\widehat{\mathcal{D} }_{\frac{1}{\hbar(N+1)}} \circ
    \mathcal{F}_{\hbar})[\psi](\omega^{-1}(u)) \nonumber \\
  &\qquad \qquad =\frac{N+1}{2\pi^2}\int_{\mathbb{S}^3}\frac{(\widehat{\mathcal{D} }_{\frac{1}{\hbar(N+1)}}\circ \mathcal{F}_{\hbar})[\psi](\omega^{-1}(y))}{|u-y|^2}\Big(\frac{|\omega^{-1}(y)|^2+1}{2}\Big)^2d\Omega(y). \label{eq:so}
\end{align}
Define $\mathcal{V}_{\hbar,N}:\mathcal{E}_{\widehat{H}_{\hbar}}(\hbar,N) \to L^2(\mathbb{S}^3)$ by
\begin{equation}\label{eq:phiii}
\mathcal{V}_{\hbar,N}[\psi](u) \coloneqq \Big(\frac{|\omega^{-1}(u)|^2+1}{2}\Big)^2(\widehat{\mathcal{D} }_{\frac{1}{\hbar(N+1)}} \circ
    \mathcal{F}_{\hbar})[\psi](\omega^{-1}(u)).
\end{equation}
Then
    \eqref{eq:so} reads
    \begin{align}\label{eq:so4in}
\mathcal{V}_{\hbar,N}[\psi](u)=\frac{N+1}{2\pi^2}\int_{\mathbb{S}^3}\frac{\mathcal{V}_{\hbar,N}[\psi](y)}{|u-y|^2}d\Omega(y).
    \end{align}
    Note that \eqref{eq:so4in} reflects $\operatorname{SO}(4)$ symmetry: if
    $\mathcal{V}_{\hbar,N}[\psi]$ satisfies \eqref{eq:so4in}, then so does $y \mapsto
   \mathcal{V}_{\hbar,N}[\psi](A^{-1}y)$ for any $A \in \operatorname{SO}(4)$. In fact, $\psi \mapsto \mathcal{V}_{\hbar,N}[\psi]$ is an isometry on $\mathcal{E}_{N}(\hbar)$. Indeed,
    \begin{align}
\lVert \mathcal{V}_{\hbar,N}[\psi] \rVert_{L^2(\mathbb{S}^3)}^2
      &\overset{\mathclap{\eqref{eq:measure}}}{=}\bigg\lVert\Big(\frac{|\bullet|^2+1}{2}\Big)^{\frac{1}{2}}
        (\widehat{\mathcal{D}}_{\frac{1}{\hbar(N+1)}}\circ
        \mathcal{F}_{\hbar})[\psi]\bigg\rVert_{L^2(\mathbb{R}^3)}^2\nonumber\\
      &=\bigg\lVert
        \widehat{\mathcal{D} }_{\frac{1}{\hbar(N+1)}}\Big[\Big(\frac{\hbar^2(N+1)^2|\bullet|^2+1}{2}\Big)^{\frac{1}{2}}
        \mathcal{F}_{\hbar}[\psi]\Big]\bigg\rVert_{L^2(\mathbb{R}^3)}^2\nonumber\\
      &=\bigg\lVert
        \Big(\frac{\hbar^2(N+1)^2|\bullet|^2+1}{2}\Big)^{\frac{1}{2}}
        \mathcal{F}_{\hbar}(\psi)\bigg\rVert_{L^2(\mathbb{R}^3)}^2\nonumber\\
      &=\hbar^2(N+1)^2\Big\langle
        \Big(\frac{|\bullet|^2}{2}-E_N(\hbar)\Big)\mathcal{F}_{\hbar}[\psi],\mathcal{F}_{\hbar}[\psi]
        \Big\rangle_{L^2(\mathbb{R}^3)}\nonumber\\
        &=\hbar^2(N+1)^2\Big\langle
          \Big(-\frac{\hbar^2}{2}\Delta-E_N(\hbar)\Big)\psi,\psi
          \Big\rangle_{L^2(\mathbb{R}^3)}\nonumber\\
      &=\hbar^2(N+1)^2\big\langle(-\hbar^2\Delta-\tfrac{1}{|\bullet|}-2E_N(\hbar))\psi,\psi
        \big\rangle_{L^2(\mathbb{R}^3)}\label{eq:referee}
    \end{align}
where we have added
$(-\frac{\hbar^2}{2}\Delta-\frac{1}{|x|}-E_N(\hbar))\psi=0$ to
the first slot of the inner product. Splitting off the
$\hbar^2(N+1)^2\langle -2E_N(\hbar)\psi,\psi \rangle=\lVert \psi \rVert_{L^2(\mathbb{R}^3)}^2$ term from \eqref{eq:referee},
\begin{align}\label{eq:unitarityhe}
  \lVert \mathcal{V}_{\hbar,N}[\psi] \rVert_{L^2(\mathbb{S}^3)}^2=\lVert \psi \rVert_{L^2(\mathbb{R}^3)}^2+\hbar^2(N+1)^2\underbrace{\Big\langle\Big(-\hbar^2\Delta-\frac{1}{|\bullet|}\Big)\psi,\psi
        \Big\rangle_{L^2(\mathbb{R}^3)}}_{\eqqcolon \text{err}(\psi)}.
\end{align}
We claim $\text{err}(\psi)=0$. Indeed, the commutator
identities $[r\partial_r,r^{-1}]=-r^{-1}, [r\partial_r,-\Delta]=-2\Delta$ imply
\begin{equation}\label{eq:commutator}
[r\partial_r,\widehat{H}_{\hbar}-E_N(\hbar)]=-\hbar^2\Delta-\frac{1}{r} \quad \text{on }C^{\infty}(\mathbb{R}^3 \setminus 0).
\end{equation}
Substituting \eqref{eq:commutator} into $\text{err}(\psi)$ and
using $\widehat{H}_{\hbar}\psi=E_N(\hbar)\psi$, we see
$$\text{err}(\psi)=\langle
(\widehat{H}_{\hbar}-E_N(\hbar))r\partial_r\psi,\psi\rangle_{L^2(\mathbb{R}^3)},$$
but $r\partial_r \psi \in H^2(\mathbb{R}^3)$ by the form of the basis \eqref{eq:basis} and applying $r \partial_r$ on both sides of the eigenvalue equation. It follows that $\operatorname{err}(\psi)=0$ from the self-adjointness of $\widehat{H}_{\hbar}-E_N(\hbar)$. Altogether, we have the definition:
\begin{defn}\label{def:fock} Fix $\hbar>0,N=0,1,2,\ldots$ and let $\mathcal{E}_{\widehat{H}_{\hbar}}(\hbar,N)$ be the eigenspace of
$\widehat{H}_{\hbar}$ with energy $E_N(\hbar)=-\frac{1}{2\hbar^2(N+1)^2}$ (defined in \eqref{eq:eigenspace}). The Fock map
$\mathcal{V}_{\hbar,N}:\mathcal{E}_{\widehat{H}_{\hbar}}(\hbar,N) \to L^2(\mathbb{S}^3)$ is the linear operator defined by
\begin{equation*}
\mathcal{V}_{\hbar,N}(\psi)(u) \coloneqq \Big(\frac{|\omega^{-1}(u)|^2+1}{2}\Big)^2(\widehat{\mathcal{D} }_{\frac{1}{\hbar(N+1)}} \circ
\mathcal{F}_{\hbar})[\psi](\omega^{-1}(u)),
\end{equation*}
where $\omega, \widehat{\mathcal{D}}_{\frac{1}{\hbar(N+1)}}$ are defined in \eqref{eq:stereo},\eqref{eq:DIE}, respectively.
\end{defn}
From \eqref{eq:unitarityhe}, we see that $\mathcal{V}_{\hbar,N}$
is an $L^2$-isometry. The next theorem shows that it is in fact
unitary on it's range. This can be shown in a multitude of different ways,
including using Green's identities \cite[pp. 333]{BI66} (or, relatedly, with layer potential formulas for the sphere \cite[Chapter 11, (11.35)]{T11}), or a group theoretic approach with Schur's lemma \cite[pp. 285]{RC21}. We give a presentation related to the former using the uniqueness of the Dirichlet problem on the ball.
\begin{theo}[\cite{F35}] The Fock map $\mathcal{V}_{\hbar,N}:\mathcal{E}_{\widehat{H}_{\hbar}}(\hbar,N) \to \mathcal{E}_{\mathbb{S}^3}(N)$ is a unitary map where $\mathcal{E}_{\mathbb{S}^3}(N)$ is the space of spherical harmonics of degree $N$.
\end{theo}
\begin{proof}
We begin by showing the range of $\mathcal{V}_{\hbar,N}$ is $\mathcal{E}_{\mathbb{S}^3}(N)$. Define the Riesz potential-type operator $T:L^2(\mathbb{S}^3) \to L^2(\mathbb{S}^3)$ by
$$T[\Phi](u) \coloneqq
\int_{\mathbb{S}^3}\frac{\Phi(y)}{|y-u|^2}d\Omega(y). $$
One can check that $T$ is bounded by Schur's integral test and
changing variables to $\mathbb{R}^3$ with stereographic
projection (see the formulas \eqref{eq:measure} and
\eqref{eq:distort}). We would like to compute $T$, and it
suffices to compute it on each spherical harmonic on $\mathbb{S}^3$. Following \cite[Chapter 8, \S 4]{T11}, for $x \in B \subset \mathbb{R}^4$ in the open unit ball, we have the
equality
\begin{equation}\label{eq:poisson}
|x|^{\ell}Y_{\ell}^{\boldsymbol{m}}(\widehat{x}) = \frac{1-|x|^2}{|\mathbb{S}^3|}\int_{\mathbb{S}^3}\frac{Y_{\ell}^{\boldsymbol{m}}(y)}{|x-y|^4}d\Omega(y).
\end{equation}
Indeed, the left hand side is a harmonic, homogeneous polynomial
on $\mathbb{R}^4$ of degree $\ell$ and the right hand side is
the Poisson kernel applied to $Y_{\ell}^{\boldsymbol{m}}$, so
both sides solve the unique Dirichlet problem
\[\begin{cases}
    \Delta u=0, & \text{on } B\\
    u=Y_{\ell}^{\boldsymbol{m}} , & \text{on } \partial B=\mathbb{S}^3
    \end{cases}\]
where $u \in C(\overline{B}) \cap C^2(B).$ Setting $t \coloneqq -\log|x|$ and letting $x \neq 0$, we have
\begin{equation}\label{eq:poisson2}
e^{-t(\ell+1)}Y_{\ell}^{\boldsymbol{m}}(\widehat{x}) = \frac{2}{|\mathbb{S}^3|}\sinh(t)\int_{\mathbb{S}^3}\frac{Y_{\ell}^{\boldsymbol{m}}(y)}{\big( 2\cosh t - 2(y \cdot \widehat{x})\big)^{2} }d\Omega(y).
\end{equation}
Equation \eqref{eq:poisson2} is true for any $t>0$ and
$\widehat{x}\in \mathbb{S}^3$, so integrating both sides from
$t$ to $\infty$ gives
$$(\ell+1)^{-1}e^{-t(\ell+1)}Y_{\ell}^{\boldsymbol{m}}(\widehat{x})
=\frac{1}{|\mathbb{S}^3|}\int_{\mathbb{S}^3}\frac{Y_{\ell}^{\boldsymbol{m}}(y)}{
  2\cosh t - 2(y\cdot \widehat{x}) }d\Omega(y),
\quad \text{for all } t>0.$$
Now taking $t \to 0^+$ and applying the dominated convergence
theorem, we recover $T[Y_{\ell}^{\boldsymbol{m}}]$ on the right hand side:
$$T [Y_{\ell}^{\boldsymbol{m}}]=\frac{2\pi^2}{\ell+1}Y_{\ell}^{\boldsymbol{m}}.$$
Since $-\Delta_{\mathbb{S}^3}Y_{\ell}^{\boldsymbol{m}}=\ell(\ell+2)Y_{\ell}^{\boldsymbol{m}}$, we see that $T=2\pi^2(-\Delta_{\mathbb{S}^3}+1)^{-1/2}$. Applying $T^{-1}$ on both sides of \eqref{eq:so4in}, we see the image of $\mathcal{V}_{\hbar,N}$ is in $\mathcal{E}_{\mathbb{S}^3}(N)$. Since $\mathcal{V}_{\hbar,N}$ is an $L^2$-isometry and $\operatorname{dim} \mathcal{E}_{\widehat{H}_{\hbar}}(\hbar,N) = \operatorname{dim}\mathcal{E}_{\mathbb{S}^3}(N)=(N+1)^2$, we see $\mathcal{V}_{\hbar,N}$ is unitary, as desired.
\end{proof}
\begin{rem} For
$\mathcal{V}_{\hbar,N}^{-1}$, it will be useful to write it as a composition of operators
\begin{equation}\label{eq:inverse}
\mathcal{V}_{\hbar,N}^{-1} \coloneqq \widehat{\mathcal{D}}_{\frac{1}{\hbar(N+1)}} \circ \mathcal{F}_{\hbar}^{-1} \circ J^{1/2} \circ K 
\end{equation}
where the $L^2$ isometry $K:\mathcal{E}_{N}(\mathbb{S}^3) \to L^2(\mathbb{R}^3)$ and the
multiplication map $J:L^2(\mathbb{R}^3) \to L^2(\mathbb{R}^3)$
are defined by
\begin{equation}\label{eq:JK}
K(f) \coloneqq \Big(\frac{2}{| \bullet |^2+1}\Big)^{3/2}f \circ \omega \quad \text{and} \quad J(f) \coloneqq \frac{2}{| \bullet |^2+1}f.
\end{equation}
It is easy to see that $K$ is an $L^2$ isometry by
\eqref{eq:measure}. Note that if we were to define $\mathcal{V}_{\hbar,N}^{-1}$ on the larger space $L^2(\mathbb{S}^3)$, it would
fail to be unitary due to the $J$
operator, but it \textit{is} unitary as an operator defined on
$\mathcal{E}_{\mathbb{S}^3}(N)$.
\par For $g \in \operatorname{SO}(3)$, define
$$\rho_{\operatorname{SO}(3)}(g):L^2(\mathbb{R}^3) \to L^2(\mathbb{R}^3),\quad \rho_{\operatorname{SO}(3)}(g)[f] \coloneqq f(g^{-1}\bullet),$$
and, for $\widetilde{g} \in \operatorname{SO}(4)$, define
$$\rho_{\operatorname{SO}(4)}(\widetilde{g}):L^2(\mathbb{S}^3)
\to L^2(\mathbb{S}^3),\quad
\rho_{\operatorname{SO}(4)}(\widetilde{g})[f] \coloneqq
f(\widetilde{g}^{-1}\bullet)$$
by rotation. For any $g \in \operatorname{SO}(3)$, one can show
\begin{equation}\label{eq:kk}
\mathcal{V}_{\hbar,N}^{-1}\circ \rho_{\operatorname{SO}(4)}\big(\begin{psmallmatrix}g & 0\\ 0& 1 \end{psmallmatrix}\big)=\rho_{\operatorname{SO}(3)}(g)\circ \mathcal{V}_{\hbar,N}^{-1}.
\end{equation}
Indeed, this follows from the invariance of $\widehat{\mathcal{D}}_{\frac{1}{\hbar(N+1)}},\mathcal{F}_{\hbar},J$ under rotations, and the fact that rotations in $\mathbb{R}^3$ transform to rotations fixing the north pole on $\mathbb{S}^3$ under stereographic projection.

\end{rem}

\section{Proof of Theorem \ref{theo:1}}
We start by defining the Coulomb coherent states. Following \cite{U84}, \cite[Appendix 1]{HV-B12}, \cite[Appendix 2]{A-CHV-B17}, \cite[Chapter 9, \S9.3]{RC21}, we define the set
\begin{equation*}
\mathcal{A} \coloneqq \{ \alpha \in \mathbb{C}^4\ ; \ |\Re \alpha|=|\Im \alpha|=1, \Re (\alpha) \cdot \Im (\alpha) =0\}.
\end{equation*}
Note that $\mathcal{A}$ is a parametrization $T_1\mathbb{S}^3$ (and hence $T_1^*\mathbb{S}^3$) where $\Re \alpha\in \mathbb{S}^3$ is the position vector and $\Im \alpha\in \mathbb{S}^3$ is the velocity vector. Recall the highest weight spherical harmonics (also called spherical coherent states) $\Phi_{\alpha,N}\in L^2(\mathbb{S}^3)$ are defined by
$$\Phi_{\alpha,N}(u)\coloneqq c_N(\alpha \cdot u)^N,$$
for any $\alpha \in \mathcal{A}$ where $c_N \coloneqq \frac{1}{\pi \sqrt{2}}\sqrt{N+1}$ is a normalization constant so that $\lVert \Phi_{\alpha,N}\rVert_{L^2(\mathbb{S}^3)}=1$. It is well-known that as $N \to \infty$, $\Phi_{\alpha,N}$
concentrates on the great circle $\{u \in \mathbb{S}^3: |\alpha \cdot u|=1\}$ (see \cite{TV-B97,JZ99}). Now
we define the Coulomb coherent states.
\begin{defn}[Coulomb coherent states]\label{def:fergie}
Fix $E<0$ and let $\hbar>0,N=0,1,2,\ldots,$ be such that $E_N(\hbar)=E$. If $\overline{\gamma}$ is a regularized Kepler orbit on $\overline{\Sigma_E}$, we define $\Psi_{\hbar,N}^{\gamma} \in
L^2(\mathbb{R}^3)$ by
\begin{equation}\label{eq:hcs}
\Psi_{\hbar,N}^{\gamma}\coloneqq \mathcal{V}_{\hbar,N}^{-1}(\Phi_{\alpha_{\gamma},N}),
\end{equation}
where $\alpha_{\gamma}\coloneqq \overline{\mathcal{M}_E}^{-1}(\overline{\gamma}(0))\in \mathcal{A}$, $\overline{\mathcal{M}_E}$ is the regularized Moser map (defined in Definition \ref{def:moser2}), and $\mathcal{V}_{\hbar,N}^{-1}$ is the inverse of the Fock map (defined in Definition \ref{def:fock}, and again in \eqref{eq:inverse}).
\end{defn}
\begin{rem}
We briefly note that this definition is projective in the sense
that if $\overline{\gamma}$ is the same Kepler orbit with a
different initial point, $\Psi_{\hbar,N}^{\gamma}$ will be the
same up to a constant phase factor. This is because a different
initial point is equivalent to rotating $\alpha$ in the $(\Re
\alpha, \Im \alpha)$-plane (i.e. $\alpha \mapsto
e^{i\theta}\alpha$), thus changing $\Phi_{\alpha,N}$ by a
constant phase factor and, consequently, changing
$\Psi_{\alpha,N}^{\gamma}$ by the same factor.\par
From Remark \ref{rem:bars}, we recall that the Kepler orbits
$\gamma$ are in one-to-one correspondence with the regularized
Kepler orbits $\overline{\gamma}$. It is for this reason that we
elect for the less notationally heavy $\Psi_{\hbar,N}^{\gamma}$
rather than $\Psi_{\hbar,N}^{\overline{\gamma}}$. In fact, in lieu of the previous paragraph, we can assume $\overline{\gamma}(0)\in \Sigma_E$. In this case, we can define $\Psi_{\hbar,N}^{\gamma}$ with the unregularized Moser map $\mathcal{M}_E$ in the same way and obtain the same (projective) definition.
\end{rem}
We begin with an argument using rotation symmetry so that we may assume, without loss of generality, that $\alpha_{\gamma}=\alpha(\theta_0) \coloneqq e_1+i(\cos
  (\theta_0)e_2+\sin(\theta_0)e_4)$ for some $\theta_0 \in [0,2\pi)$. Geometrically, this corresponds to the great circle $$\big\{e_1\cos s+(e_2\cos
  \theta_0+e_4\sin\theta_0)\sin s \mid s \in [0,2\pi)\big\}.$$ This reduction was done in \cite[(4.56)]{TV-B97}, and we give more details here.
\proofpart{Step}{Reduction to $\alpha_{\gamma}=\alpha(\theta_0) \coloneqq e_1+i(\cos
  (\theta_0)e_2+\sin(\theta_0)e_4)$}
We claim that if the result is true for $\gamma_0$ such that $\alpha_{\gamma_0} = e_1+i(\cos
  (\theta_0)e_2+\sin(\theta_0)e_4)$, then it is also true for any $\alpha_{\gamma} \in \mathcal{A}$. Indeed, let $\overline{\gamma}$ be a regularized Kepler orbit and $\varphi(s)=\Re \alpha_{\gamma} \cos s + \Im \alpha_{\gamma} \sin s$ be the corresponding great circle on $\mathbb{S}^3$. There exists an $s_0$ such that the fourth coordinate of $\varphi(s_0)$ is zero (since $a \cos s +b \sin s$ can be written as a single trigonometric function with a different amplitude and shifted phase). By reparametrizing $\varphi$ to begin at $s_0$, we can assume the fourth coordinate of $\Re \alpha_{\gamma}$ is zero. There exists a rotation in the first three coordinates of $\varphi$ such that the initial point of $\varphi$ is at $e_1$. That is, there exists $g \in \operatorname{SO}(3)$ such that
\begin{equation*}\label{eq:wlog}
\begin{pmatrix}g & 0\\ 0& 1 \end{pmatrix}\varphi(s)= e_1 \cos s +(a_2e_2+a_3e_3+a_4e_4)\sin s.
\end{equation*}
where $e_j$ is the $j$th standard basis vector in $\mathbb{R}^4$, and $a_j \in \mathbb{R}$ are such that $a_2^2+a_3^2+a_4^2=1$. We can apply a further rotation in the $e_2e_3$-plane so as to make $a_3=0$, so altogether there exists $g \in \operatorname{SO}(3)$ such that
\begin{equation}\label{eq:wlog}
\begin{pmatrix}g & 0\\ 0& 1 \end{pmatrix}\varphi(s)= e_1 \cos s +(\cos (\theta_0)e_2+\sin(\theta_0)e_4)\sin s.
\end{equation}
for some $\theta_0 \in [0,2\pi)$. That is, $\begin{pmatrix}g & 0\\ 0& 1 \end{pmatrix}\alpha_{\gamma}=\alpha_{\gamma_0}=\alpha(\theta_0)$. Then 
\begin{align}
  \langle
  \operatorname{Op}_{\hbar}(a)\Psi_{\hbar,N}^{\gamma_0},\Psi_{\hbar,N}^{\gamma_0}
  \rangle&=\langle
  \operatorname{Op}_{\hbar}(a)\mathcal{V}_{\hbar,N}^{-1}[\Phi_{\alpha_{\gamma_0},N}],\mathcal{V}_{\hbar,N}^{-1}[\Phi_{\alpha_{\gamma_0},N}]
           \rangle\nonumber \\
  &=\big\langle
  \operatorname{Op}_{\hbar}(a)\big(\mathcal{V}_{\hbar,N}^{-1}\circ \rho_{\operatorname{SO}(4)}\big( \begin{psmallmatrix}g & 0\\ 0& 1 \end{psmallmatrix}\big)\big)[\Phi_{\alpha_{\gamma},N}],\big(\mathcal{V}_{\hbar,N}^{-1}\circ \rho_{\operatorname{SO}(4)}\big( \begin{psmallmatrix}g & 0\\ 0& 1 \end{psmallmatrix}\big)\big)[\Phi_{\alpha_{\gamma},N}]
  \big\rangle\nonumber \\
&\overset{\mathclap{\eqref{eq:kk}}}{=}\langle
  \operatorname{Op}_{\hbar}(a)(\rho_{\operatorname{SO}(3)}(g) \circ\mathcal{V}_{\hbar,N}^{-1})[\Phi_{\alpha_{\gamma},N}],(\rho_{\operatorname{SO}(3)}(g) \circ\mathcal{V}_{\hbar,N}^{-1})[\Phi_{\alpha_{\gamma},N}]
  \rangle\nonumber \\
  &=\big\langle
   \rho_{\operatorname{SO}(3)}(g)\big[\operatorname{Op}_{\hbar}(a(g\bullet ,g\bullet ))[\Psi_{\hbar,N}^{\gamma}]\big],\rho_{\operatorname{SO}(3)}(g)[\Psi_{\hbar,N}^{\gamma}]
    \big\rangle\nonumber \\
    &=\langle \operatorname{Op}_{\hbar}(a(g\bullet ,g\bullet))[\Psi_{\hbar,N}^{\gamma}],[\Psi_{\hbar,N}^{\gamma}] \rangle. \label{eq:eqnn}
\end{align}
Using the shorthand $\lim_{\hbar,N}$ for the limit as $\hbar \to 0,N \to \infty$ with $E_N(\hbar) = E,$
\begin{align*}
\lim_{\hbar,N}\langle \operatorname{Op}_{\hbar}(a(g\bullet ,g\bullet))\Psi_{\hbar,N}^{\gamma},\Psi_{\hbar,N}^{\gamma} \rangle \overset{\mathclap{\eqref{eq:eqnn}}}{=}\lim_{\hbar,N}  \langle
  \operatorname{Op}_{\hbar}(a)\Psi_{\hbar,N}^{\gamma_0},\Psi_{\hbar,N}^{\gamma_0}
  \rangle =\int_{\gamma_0}a\overset{\mathclap{\eqref{eq:later}}}{=}\int_{\gamma}a(g \bullet,g\bullet),
\end{align*}
as desired.
\par Before we move to the second step, we recall from Remark \ref{rem:collision} that the non-collision orbits correspond to great circles not going through the north pole. That is, when $\theta_0 \neq \pi/2,3\pi/2$ in our reduced $\alpha(\theta_0)$.
\proofpart{Step}{$\gamma$ is not a collision orbit (i.e. $\theta_0 \neq \pi/2,3\pi/2$)}
We prove the theorem for $\gamma$ not being a collision orbit, which will be important to the statement of Lemma \ref{lem:crit}. Suppose $\alpha=\alpha(\theta_0)$, defined in the statement of the previous step. Since $E_{N}(\hbar)=E$, we again use the notation $p_0=\sqrt{-2E}=\frac{1}{\hbar(N+1)}$. If $a \in C_c^{\infty}(T^*\mathbb{R}^3)$, then we have
$$\langle \operatorname{Op}_{\hbar}(a)\Psi_{\hbar,N}^{\gamma},\Psi_{\hbar,N}^{\gamma}\rangle=\int_{T^*\mathbb{R}^3}a(x,\xi)W_{\Psi_{\hbar,N}^{\gamma}}(x,\xi)dxd\xi,$$
where $W_{\Psi_{\hbar,N}^{\gamma}} \in C_0(T^*\mathbb{R}^3)\cap L^2(T^*\mathbb{R}^3)$ (see \cite[Proposition 1.92]{F89}) is such that
$$W_{\Psi_{\hbar,N}^{\gamma}}(x,\xi) \coloneqq
\frac{1}{(2\pi
  \hbar)^3}\int_{\mathbb{R}^3}\Psi_{\hbar,N}^{\gamma}(x+\tfrac{v}{2})\overline{\Psi_{\hbar,N}^{\gamma}(x-\tfrac{v}{2})}e^{-\frac{i}{\hbar}\langle
  v,\xi \rangle}dv. $$
Using basic facts about Wigner distributions (see \cite[Proposition 1.94]{F89}), we see
\begin{align*}
W_{\Psi_{\hbar,N}^{\gamma}}(x,\xi) &=
                                             W_{\mathcal{V}_{\hbar,N}^{-1}[\Phi_{\alpha,N}]}(x,\xi)\\
  &= W_{(\widehat{\mathcal{D} }_{p_0}\circ \mathcal{F}_{\hbar}^{-1} \circ J^{1/2}
    \circ K)[\Phi_{\alpha,N}]}(x,\xi)\\
  &= W_{(\mathcal{F}_{\hbar}^{-1} \circ J^{1/2} \circ
    K)[\Phi_{\alpha,N}]}(p_0x,p_0^{-1}\xi)\\
  &= W_{( J^{1/2} \circ K)[\Phi_{\alpha,N}]}(p_0^{-1}\xi,-p_0x).
\end{align*}
So we have
\begin{align}
&\langle \operatorname{Op}_{\hbar}(a)\Psi_{\hbar,N}^{\gamma},\Psi_{\hbar,N}^{\gamma}\rangle =
                                           \int_{T^*\mathbb{R}^3}a(x,\xi)W_{\Psi_{\hbar,N}^{\gamma}}(x,\xi)dxd\xi\nonumber
\\
 &=\int_{T^*\mathbb{R}^3}a(p_0^{-1}x,p_0\xi)W_{\Psi_{\hbar,N}^{\gamma}}(p_0^{-1}x,p_0\xi)dxd\xi
  \nonumber\\
    &=\int_{T^*\mathbb{R}^3}a(p_0^{-1}x,p_0\xi)W_{(J^{1/2}\circ K)[\Phi_{\alpha,N}]}(\xi,-x)dxd\xi\nonumber\\
 &=\frac{c_N^2}{(2\pi \hbar)^3}\int_{\mathbb{R}^3}\int_{T^*\mathbb{R}^3} \frac{16 a(p_0^{-1}x,p_0\xi)(\alpha
\cdot \omega(\xi+\frac{v}{2}))^N(\overline{\alpha}
\cdot \omega(\xi-\frac{v}{2}))^Ne^{\frac{i}{\hbar}\langle
    v,x \rangle}}{ (|\xi+\frac{v}{2}|^2 +
               1)^2( |\xi-\frac{v}{2}|^2
               + 1)^2}dxd\xi dv\nonumber \\
    &=\frac{(N+1)^4}{16\pi^5}\int_{\mathbb{R}^3}\int_{T^*\mathbb{R}^3} f(x,\xi,v)e^{iN P(x,\xi,v)}dxd\xi dv, \label{eq:newish2}
\end{align}
where the last line we use $p_0^{-1}=\hbar(N+1)$ and the substitution $x \mapsto p_0^{-1}x$ while defining
\begin{align*}
f(x,\xi,v) &\coloneqq \frac{ 16a(p_0^{-2}x,p_0\xi)e^{i\langle v,x \rangle}}{ (|\xi+\frac{v}{2}|^2 +
               1)^2( |\xi-\frac{v}{2}|^2
               + 1)^2} \\
P(x,\xi,v) &\coloneqq -i\log\big(\alpha
\cdot \omega(\xi+\tfrac{v}{2})\big)-i\log\big(\overline{\alpha}
\cdot \omega(\xi-\tfrac{v}{2})\big)+\langle v,x \rangle.
\end{align*}
First note that $\Im P(x,\xi,v) \geq 0$. This is because $|\alpha
\cdot \omega(\xi\pm \tfrac{v}{2})| \leq 1$ since $|\alpha
\cdot \omega(\xi\pm \tfrac{v}{2})|$ is the norm of
projection of $\omega(\xi\pm \tfrac{v}{2})$ on the
$\operatorname{span}_{\mathbb{R}}(\Re\alpha,\Im\alpha)$. In particular, we have equality if and only if $\omega(\xi\pm \tfrac{v}{2})\in \operatorname{span}_{\mathbb{R}}(\Re\alpha,\Im\alpha)$.
We would like to apply stationary phase methods to formula
\eqref{eq:newish2}. We have the following lemma.
\begin{lem}\label{lem:crit}
For the complex phase $P$ above, let $\mathcal{C} \coloneqq \{\nabla_x P =\nabla_{\xi} P = \nabla_v P=0,\Im P(x,\xi,v)=0\}$ be the critical manifold. Then
$$\mathcal{C}=
\bigg\{(x,\xi,v)=\bigg(\begin{psmallmatrix}\sin \beta -\sin \theta_0 \\ -\cos \theta_0 \cos \beta\\ 0 \end{psmallmatrix},\tfrac{1}{1-\sin
                         \theta_0 \sin \beta}\begin{psmallmatrix}\cos(\beta)\\ \sin(\beta)\cos(\theta_0)\\
                 0 \end{psmallmatrix},\begin{psmallmatrix} 0 \\ 0\\ 0 \end{psmallmatrix}\bigg) \mid \beta \in [0,2\pi)\bigg\}.$$
That is, $\mathcal{C}=\operatorname{image}(\gamma_0)\times \{(0,0,0)\}$ where $\gamma_0$ is the Kepler orbit on $\Sigma_{-1/2}$ that contains $\mathcal{M}_{-1/2}(\alpha(\theta_0))$.          
\end{lem}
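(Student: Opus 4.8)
The plan is to impose the four conditions cutting out $\mathcal{C}$ one at a time, from cheapest to hardest. Since $\nabla_x P = v$, the condition $\nabla_x P = 0$ forces $v = 0$, so $\mathcal{C} \subseteq \{v = 0\}$. On this slice $\omega(\xi)$ is real, hence $\overline{\alpha}\cdot\omega(\xi) = \overline{\alpha\cdot\omega(\xi)}$; writing $g(p) \coloneqq \log(\alpha\cdot\omega(p))$ we get $P(x,\xi,0) = -ig(\xi) - i\overline{g(\xi)} = -2i\log|\alpha\cdot\omega(\xi)|$, which is purely imaginary with $\Im P(x,\xi,0) = -2\log|\alpha\cdot\omega(\xi)| \ge 0$, recovering the positivity $\Im P \ge 0$ noted above. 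Thus $\Im P = 0$ on $\{v=0\}$ exactly when $|\alpha\cdot\omega(\xi)| = 1$, i.e. when $\omega(\xi)$ lies on the great circle $\operatorname{span}_{\mathbb R}(\Re\alpha,\Im\alpha)\cap S^3$. Because $\theta_0 \neq \pi/2,3\pi/2$, this great circle misses the north pole $e_4$, hence lies entirely in $\widehat{S}^3 = \operatorname{range}(\omega)$; pulling it back through the diffeomorphism $\omega^{-1}$ gives a smooth embedded circle in $\mathbb{R}^3$. Writing $\omega(\xi) = \Re\alpha\cos\beta + \Im\alpha\sin\beta$ and applying $\omega^{-1}(u)_k = u_k/(1-u_4)$ yields precisely the $\xi = \xi(\beta)$ in the statement.

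It remains to impose $\nabla_\xi P = 0$ and $\nabla_v P = 0$ on the set $\{v = 0,\ \xi = \xi(\beta)\}$. Differentiating through the logarithms and setting $v = 0$ gives $\nabla_\xi P|_{v=0} = -i(\nabla g(\xi) + \overline{\nabla g(\xi)}) = -2i\,\nabla_\xi\log|\alpha\cdot\omega(\xi)|$ and $\nabla_v P|_{v=0} = x + \Im\nabla g(\xi) = x + \nabla_\xi\arg(\alpha\cdot\omega(\xi))$. The key point is that $\nabla_\xi P = 0$ holds automatically on our circle: the function $\xi \mapsto \log|\alpha\cdot\omega(\xi)|$ attains its global maximum $0$ exactly along $\{\xi = \xi(\beta)\}$, and it is smooth there since the modulus equals $1 \neq 0$, so its gradient vanishes on that set. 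Hence the condition $\nabla_\xi P = 0$ adds nothing new, and $\nabla_v P = 0$ reduces to $x = -\nabla_\xi\arg(\alpha\cdot\omega(\xi))$, which determines $x$ as a function of $\beta$.

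The remaining work is to evaluate $\nabla_\xi\arg(\alpha\cdot\omega(\xi))$ along $\xi = \xi(\beta)$. With $\alpha = e_1 + i(\cos\theta_0 e_2 + \sin\theta_0 e_4)$,
$$\alpha\cdot\omega(p) = \frac{2p_1 + i\big(2\cos\theta_0\,p_2 + \sin\theta_0(|p|^2-1)\big)}{|p|^2+1},$$
so $\arg(\alpha\cdot\omega(p))$ depends on $p$ only through $A(p) \coloneqq 2p_1$ and $B(p) \coloneqq 2\cos\theta_0\,p_2 + \sin\theta_0(|p|^2-1)$, with $\nabla\arg(\alpha\cdot\omega) = (A\nabla B - B\nabla A)/(A^2+B^2)$. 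Substituting the parametrization and using the identities $|\xi(\beta)|^2 + 1 = 2/(1-\sin\theta_0\sin\beta)$ and $\alpha\cdot\omega(\xi(\beta)) = e^{i\beta}$ to clear the common denominator, this collapses to $(\sin\theta_0 - \sin\beta,\ \cos\theta_0\cos\beta,\ 0)$, giving $x = (\sin\beta - \sin\theta_0,\ -\cos\theta_0\cos\beta,\ 0)$, as claimed. Finally, to identify $\mathcal{C}$ with the Kepler orbit, I substitute $(x(\beta),\xi(\beta))$ into the explicit formula \eqref{eq:mos2} for $\mathcal{M}_E$ with $p_0 = 1$ and check, again using that the factor $1 - \sin\theta_0\sin\beta$ cancels throughout, that $\mathcal{M}(x(\beta),\xi(\beta)) = (\varphi(\beta),\dot\varphi(\beta))$ with $\varphi(s) = \Re\alpha\cos s + \Im\alpha\sin s$. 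Since this is the unit-speed cogeodesic lying over the great circle generated by $\alpha$, Theorem \ref{theo:moser} identifies $\mathcal{C}$ as the corresponding Kepler orbit on $\Sigma_{-1/2}$.

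I expect the only real obstacle to be the bookkeeping in the last paragraph: the trigonometric simplification of $\nabla_\xi\arg(\alpha\cdot\omega(\xi))$ and of the Moser image. The one genuinely non-obvious step is that $\nabla_\xi P$ vanishes for free along the concentration circle, and this is exactly where the non-collision hypothesis $\theta_0\neq\pi/2,3\pi/2$ enters: it is what makes that circle a full, smooth curve inside $\operatorname{range}(\omega)$, so that the maximum-principle argument applies.
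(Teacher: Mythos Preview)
Your proof is correct and follows essentially the same path as the paper: impose $\nabla_x P=0$ to get $v=0$, then $\Im P=0$ to pin $\omega(\xi)$ to the great circle and extract $\xi(\beta)$, then $\nabla_v P=0$ to determine $x(\beta)$. Your argument is in fact slightly more complete than the paper's: you explicitly dispose of the condition $\nabla_\xi P=0$ via the clean maximum-principle observation that $\log|\alpha\cdot\omega(\xi)|$ attains its global maximum along the concentration circle (the paper simply does not mention this condition in its proof), and you carry out the verification that $\mathcal{M}(x(\beta),\xi(\beta))$ lands on the cogeodesic, which the paper only asserts.
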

\begin{proof}
The condition $\nabla_x P =0$ implies $v=0$. As noted above, the second condition is equivalent to the
condition $\omega(\xi\pm \tfrac{v}{2})\in
\operatorname{span}_{\mathbb{R}}(\Re\alpha,\Im\alpha)$. Let $\beta$ be such that $\alpha \cdot \omega(\xi)=e^{i\beta}$. Since $\alpha = e_1+i(\cos(\theta_0)e_2+\sin(\theta_0)e_4)$, we have
$$\omega(\xi)=\cos(\beta)e_1+\sin(\beta)(e_2\cos
\theta_0+ e_4\sin \theta_0).$$
 Taking $\omega^{-1}$ on both sides, we have
\begin{align*}
\xi_1 =\frac{\cos(\beta)}{1-\sin
                         \theta_0 \sin \beta},\quad \xi_2=\frac{\sin(\beta)\cos(\theta_0)}{1-\sin\theta_0 \sin \beta},\quad \xi_3=0.
\end{align*}
Finally, the $\partial_{v_j}P(x,\xi,0)=0$
reads
\begin{align*}
-i\frac{\alpha_j+[\alpha_4-\alpha \cdot \omega(\xi)]\xi_j}{(|\xi|^2+1)(\alpha \cdot \omega(\xi))}+i\frac{\overline{\alpha}_j+[\overline{\alpha}_4-\overline{\alpha} \cdot \omega(\xi)]\xi_j}{(|\xi|^2+1)(\overline{\alpha} \cdot \omega(\xi))}+x_j=0,
\end{align*}
which implies
\begin{align*}
x_j&=  \Re \bigg( i\frac{2\alpha_j}{(|\xi|^2+1)(\alpha \cdot
     \omega(\xi))}+i\omega(\xi)_j\frac{\alpha_4-\alpha \cdot
     \omega(\xi)}{(\alpha \cdot \omega(\xi))}\bigg)\\
  &=(1-\sin \theta_0 \sin \beta)\Re \big(i\alpha_je^{-i\beta}\big)-\omega(\xi)_j\cos \beta \sin \theta_0.
\end{align*}
We see that
\begin{align*}
  x_1 = \sin \beta-\sin\theta_0,\quad x_2 = -\cos \beta\cos \theta_0,\quad x_3=0,
\end{align*}
as desired.
\end{proof}
Let $\pi_x \mathcal{C}$ denote the projection of $\mathcal{C}$
to configuration space, and let $\chi \in
C_c^{\infty}(\mathbb{R}^3,[0,1])$ be a smooth bump
function that is $1$ on $\pi_x \mathcal{C}$ and $0$ off of a
small tubular neighborhood of $\pi_x \mathcal{C}$. Then the integral in \eqref{eq:newish2} becomes
\begin{align}
\int_{\mathbb{R}^3}\int_{T^*\mathbb{R}^3}
  f(x,\xi,v)e^{iN P(x,\xi,v)}dxd\xi
  dv&=\int_{\mathbb{R}^3}\int_{T^*\mathbb{R}^3}\chi(x)
      f(x,\xi,v)e^{iN P(x,\xi,v)}dxd\xi dv \nonumber \\
  &\qquad +\int_{\mathbb{R}^3}\int_{T^*\mathbb{R}^3}(1-\chi(x)) f(x,\xi,v)e^{iN P(x,\xi,v)}dxd\xi dv. \label{eq:newish3}
\end{align}
We claim the second integral of \eqref{eq:newish3} is
$O(N^{-\infty})$. Indeed, if we further split the integral with a smooth
bump function in $v$ with support in a neighborhood of the
origin, we see that the integral for $v$ small is covered by the
method of nonstationary phase \cite[Theorem 7.7.1]{H85} since the support is outside $\mathcal{C}$. For $v$ large, we observe that
$$\frac{1}{iNv_j}\partial_{x_j}e^{iNP(x,\xi,v)}=e^{iNP(x,\xi,v)}.$$
So we can repeatedly apply integration by parts and gain powers of $N$ in the denominator since $v_j^{-1}\partial_{x_j}[(1-\chi)f]=O(|v|^{-8})$.
\par For the first integral of \eqref{eq:newish3}, we apply the
change of variables $x \mapsto (\beta,t,s)$ where
$$x=x(\beta)+tn_{\beta}+se_3 \quad \text{where} \quad x(\beta)
\coloneqq \begin{psmallmatrix}\sin \beta -\sin \theta_0 \\ -\cos
            \theta_0 \cos \beta\\ 0 \end{psmallmatrix},\ \ 
            n_{\beta} \coloneqq \tfrac{1}{\sqrt{1-\sin^2\beta
                \sin^2\theta_0}}\begin{psmallmatrix}
                        -\sin\beta \cos \theta_0\\ \cos \beta\\
                                  0 \end{psmallmatrix},$$
where $\beta \in [0,2\pi)$ and $t^2+s^2<\delta$ for some
$\delta>0$. Geometrically, $x(\beta)$ is the point along the orbit in configuration space, $n_{\beta}$ is the unit normal vector orthogonal to the $(e_1,e_2)$-plane containing the configuration space orbit, and $e_3$ is the unit normal vector to this plane. That is, $\lVert n_{\beta} \rVert =1$ and $x(\beta) \cdot n_{\beta}=0$, so the change of variables parametrizes a tubular neighborhood of $\pi_x \mathcal{C}$. With this change of variables, it can be computed that $$dx=\Big|\sqrt{1-\sin^2\beta
                \sin^2\theta_0}+t \cos \theta_0\Big|dtdsd\beta.$$ Observe that the Jacobian factor is smooth and non-vanishing close enough to $\pi_x \mathcal{C}$. Altogether, by \eqref{eq:newish3}, we have
\begin{align}
\int_{\mathbb{R}^3}\int_{T^*\mathbb{R}^3}
  \chi(x)f(x,\xi,v)e^{iN P(x,\xi,v)}dxd\xi=\int_0^{2\pi}\int_{t^2+s^2<\delta}\int_{\mathbb{R}^6}\widetilde{f}_{\beta}(t,s,\xi,v)e^{iN \widetilde{P}_{\beta}(t,s,\xi,v)}d\xi dv dtds d\beta,\label{eq:barely}
\end{align}
where
\begin{align*}
\widetilde{f}_{\beta}(t,s,\xi,v)\coloneqq&\ f(x(\beta)+tn_{\beta}+se_3,\xi,v)\chi(x(\beta)+tn_{\beta}+se_3)\Big|\sqrt{1-\sin^2\beta
                                         \sin^2\theta_0}+t \cos \theta_0\Big|\\
\widetilde{P}_{\beta}( t,s,\xi,v)  \coloneqq& \  P(x(\beta)+tn_{\beta}+se_3,\xi,v).
\end{align*}
For fixed $\beta$, we apply the method of stationary phase in the variables $(t,s,\xi,v)$. By Lemma \ref{lem:crit}, the only critical point of $\widetilde{P}$ is at $(0,0,\xi(\beta),0)$ where $\xi(\beta) \coloneqq \tfrac{1}{1-\sin
                         \theta_0 \sin
                         \beta}\begin{psmallmatrix}\cos(\beta)\\
                                 \sin(\beta)\cos(\theta_0)\\
  0 \end{psmallmatrix}$. The
                                 Hessian of $\widetilde{P}$
                                 evaluated at this critical
                                 point is
                                 \begin{equation}\label{eq:HESS}
                                 \operatorname{Hess}(\widetilde{P})_{crit}=\bordermatrix{     
            & t     & s   & \xi & v      \cr
    t     & 0     & 0    & 0 & n_{\beta}^T      \cr
    s    & 0    & 0     & 0 & e_3^T      \cr
    \xi  & 0  & 0  & -2i \Re H_{\beta} & \Im H_{\beta}   \cr
    v     & n_{\beta}     & e_3     & \Im H_{\beta} & -\tfrac{i}{2}\Re H_{\beta}      \cr
  }, \end{equation}
  where $H_{\beta}$ is the Hessian of the function $\xi \mapsto
  \log(\alpha \cdot \omega(\xi))$ evaluated at $\xi(\beta)$. The
  calculation of the determinant of the matrix $H_{\beta}$ was
  studied in \cite[Appendix A.3]{RC21}. Using a lower triangular block matrix
  identity, we have
\begin{equation}\label{eq:determinant}
\det \operatorname{Hess}(\widetilde{P})_{\beta}=-\det \begin{pmatrix}-2i \Re H_{\beta} & \Im H_{\beta}   \cr
   \Im H_{\beta} & -\tfrac{i}{2}\Re H_{\beta} \end{pmatrix} \det \begin{pmatrix}0 & n_{\beta}^T\\ 0 & e_3^T \end{pmatrix}\begin{pmatrix}-2i \Re H_{\beta} & \Im H_{\beta}   \cr
   \Im H_{\beta} & -\tfrac{i}{2}\Re H_{\beta} \end{pmatrix}^{-1}\begin{pmatrix}0 & 0\\ n_{\beta} & e_3 \end{pmatrix}.
\end{equation}
  Using the block matrix
  identity
  \begin{equation}\label{eq:blockid}
\begin{pmatrix}\frac{1}{2}I & -I \\ 0 &
                                          I \end{pmatrix}\begin{pmatrix}2A & B \\ B & \frac{1}{2}A \end{pmatrix}\begin{pmatrix}I &  \frac{1}{2}I\\ 0 & I \end{pmatrix}=\begin{pmatrix}A-B &  0\\ B &\frac{1}{2}(A+B) \end{pmatrix},
                                          \end{equation}
                                          we see
\begin{equation}\label{eq:hellomatrix}
\Big|\det \begin{pmatrix}-2i \Re H_{\beta} & \Im H_{\beta}   \cr
   \Im H_{\beta} & -\tfrac{i}{2}\Re H_{\beta} \end{pmatrix}\Big|=|\det H_{\beta}|^2.
   \end{equation}
Inverting the identity \eqref{eq:blockid} and using the formula
for the inverse of a triangular block matrix, we see
$$\begin{pmatrix}-2i \Re H_{\beta} & \Im H_{\beta}   \cr
   \Im H_{\beta} & -\tfrac{i}{2}\Re
                   H_{\beta} \end{pmatrix}^{-1}=\begin{pmatrix}
                  * & * \\ * & 2
                                                               \Re(H_{\beta}^{-1})\end{pmatrix}. $$
But note $n_{\beta}$ and $e_3$ are eigenvectors of $
\Re(H_{\beta}^{-1})$ with eigenvalues $\lambda_2,\lambda_3$
defined in \cite[(9.39)]{RC21}. Thus
\begin{equation}\label{eq:hessfinal}
\sqrt{|\det \operatorname{Hess}(\widetilde{P})_{\beta}|}\overset{\eqref{eq:determinant},\eqref{eq:hellomatrix}}{=}2|\det H_{\beta}| \sqrt{\lambda_2\lambda_3}=2(1-\sin \beta \sin \theta_0)^3\sqrt{1-\sin^2 \beta \sin^2 \theta_0},
\end{equation}
where the last equality follows from (9.33) and (9.39) in
Chapter 9 of \cite{RC21} ($n_{\beta}$ is the normalized
$v_{\beta}$ in \cite{RC21}). Now we apply stationary phase to
\eqref{eq:barely}, and with \eqref{eq:newish3}, we see
\begin{align*}
&\frac{(N+1)^4}{16\pi^5}\int_{\mathbb{R}^3}\int_{T^*\mathbb{R}^3}
  f(x,\xi,v)e^{iN P(x,\xi,v)}dxd\xi dv\\
  &=\frac{(N+1)^4}{16\pi^5}\Big(\frac{2\pi}{N}\Big)^4\int_{0}^{2\pi}\frac{16a(p_0^{-2}x(\beta),p_0
    \xi(\beta))}{(|\xi(\beta)|^2+1)^4}\frac{\sqrt{1-\sin^2\beta
    \sin^2\theta_0}}{2(1-\sin \beta \sin
    \theta_0)^3\sqrt{1-\sin^2 \beta \sin^2
    \theta_0}}d\beta+O(\tfrac{1}{N}) \\
  &=\frac{1}{2\pi} \int_{0}^{2\pi}a(p_0^{-2}x(\beta),p_0
    \xi(\beta))(1-\sin \beta \sin
    \theta_0)d\beta+O(\tfrac{1}{N})\\
  &=\frac{p_0^3}{2\pi} \int_{0}^{2\pi/p_0^3}a\big(\gamma(t)\big)dt+O(\tfrac{1}{N}),
\end{align*}
where the last line we change variables $\beta \to t$ where $t$
is as in Theorem \ref{theo:moser}.
\stepcounter{proofpart}\stepcounter{proofpart}
\proofpart{Step}{$\gamma$ is a collision orbit
  (i.e. $\theta_0=\pi/2,3\pi/2)$}
By reversing time, we can assume without loss of generality that
$\theta_0=\pi/2$. The setup is the same as in Step 2. We still
consider the integral \eqref{eq:newish2}, but the critical
manifold is now
\begin{equation*}
\mathcal{C} =\bigg\{\bigg(\begin{psmallmatrix}\sin \beta-1 \\ 0\\ 0 \end{psmallmatrix},\tfrac{1}{1-\sin \beta}\begin{psmallmatrix}\cos \beta\\ 0\\
                 0 \end{psmallmatrix},\begin{psmallmatrix} 0 \\ 0\\ 0 \end{psmallmatrix}\bigg) \mid \beta \in (-3\pi/2,\pi/2)\bigg\}.
                 \end{equation*}
We cannot apply the same change of variables in only the $x$
variables as before since the manifold degenerates into a line segment
when projected to configuration space. We instead consider a tubular neighborhood of $\mathcal{C} \cap \operatorname{supp} a(p_0^{-2}\bullet, p_0 \bullet)$ in phase space. Let $\chi \in C_{c}^{\infty}(T^*\mathbb{R}^3,[0,1])$ be a smooth bump function that is $1$ on $\mathcal{C} \cap \operatorname{supp} a(p_0^{-2}\bullet, p_0 \bullet)$ and $0$ off of a
small tubular neighborhood of $\mathcal{C} \cap \operatorname{supp} a(p_0^{-2}\bullet, p_0 \bullet)$. Then we have
\begin{align}
\int_{\mathbb{R}^3}\int_{T^*\mathbb{R}^3}
  f(x,\xi,v)e^{iN P(x,\xi,v)}dxd\xi
  dv&=\int_{\mathbb{R}^3}\int_{T^*\mathbb{R}^3}\chi(x,\xi)
      f(x,\xi,v)e^{iN P(x,\xi,v)}dxd\xi dv \nonumber \\
  &\qquad +\int_{\mathbb{R}^3}\int_{T^*\mathbb{R}^3}(1-\chi(x,\xi)) f(x,\xi,v)e^{iN P(x,\xi,v)}dxd\xi dv. \label{eq:newish'3}
\end{align}
The second integral is $O(N^{-\infty})$ by the same reasoning proceeding \eqref{eq:newish3}. For the first integral, we do a
change of variables. We define the following vectors:
$$x(\beta) \coloneqq \begin{psmallmatrix}\sin \beta-1 \\ 0\\ 0 \end{psmallmatrix},\ \ \xi(\beta) \coloneqq \tfrac{1}{1-\sin \beta}\begin{psmallmatrix}\cos \beta\\ 0\\
                 0 \end{psmallmatrix},\ \ m_{\beta} \coloneqq
                 c_{\beta}\begin{psmallmatrix}\frac{1}{\sin
                            \beta-1} \\ 0\\
                            0 \end{psmallmatrix},\ \ m_{\beta}'
                     \coloneqq
                            c_{\beta}\begin{psmallmatrix}\cos
                                       \beta \\ 0\\
                                       0 \end{psmallmatrix} $$
                                       where $c_{\beta}
                                       \coloneqq (\cos^2
                                       \beta+\frac{1}{(1-\sin
                                        \beta)^2})^{-1/2}$ is a
                                       normalization factor. Now
                                       we do the change of
                                       variables
                                       $(x,\xi,v) \to
       (\beta,t_1,t_2,s_1,s_2,s_3,s_1',s_2',s_3')$
                                       where
                                       $$x=x(\beta)+t_1e_2+t_2e_3+s_1m_{\beta},
                                       \quad
                                       \xi=\xi(\beta)+s_1m_{\beta}'+s_2e_2+s_3e_3,
                                       \quad
                                       v=s_1'm_{\beta}'+s_2'e_2+s_3'e_3.$$
On $\mathcal{C}$, one can easily compute that $dxd\xi dv=|\cos \beta| dt ds ds'd\beta$. We proceed the same as before: we apply the method of stationary phase in the variables $(t,s,s')$ at the only critical point $(0,0,0)$. The Hessian is very similar to \eqref{eq:HESS} (in fact, this case is easier as the block matrices are diagonal), and one can compute that \begin{equation*}
\sqrt{|\det \operatorname{Hess}(\widetilde{P})_{\beta}|}=2(1-\sin \beta)^{3}|\cos \beta|.
\end{equation*}
We then have
           \begin{align*}
&\frac{(N+1)^4}{16\pi^5}\int_{\mathbb{R}^3}\int_{T^*\mathbb{R}^3}
  f(x,\xi,v)e^{iN P(x,\xi,v)}dxd\xi dv\\
  &=\frac{(N+1)^4}{16\pi^5}\Big(\frac{2\pi}{N}\Big)^4\int_{-3\pi/2}^{\pi/2}\frac{16a(\tfrac{1}{p_0^2}x(\beta),p_0
    \xi(\beta))}{(|\xi(\beta)|^2+1)^4}\frac{|\cos \beta|}{2(1-\sin \beta)^3|\cos \beta|}d\beta+O(\tfrac{1}{N}) \\
  &=\frac{1}{2\pi} \int_{-3\pi/2}^{\pi/2}a(\tfrac{1}{p_0^2}x(\beta),p_0
    \xi(\beta))(1-\sin \beta)d\beta+O(\tfrac{1}{N})\\
  &=\frac{p_0^3}{2\pi} \int_{t_{\gamma}-2\pi/p_0^3}^{t_{\gamma}}a\big(\gamma(t)\big)dt+O(\tfrac{1}{N}),
           \end{align*}
where the last line we change variables $\beta \to t$ where $t$
is as in Theorem \ref{theo:moser} and $t_{\gamma}$ is the collision time (defined in Remark \ref{rem:collision}).         
\section{Proof of Theorem \ref{theo:2}}
We start by viewing the resulting integral as an integral on
$\overline{\Sigma_E}$ (see \eqref{eq:reghamil}). Indeed, since $a \in C_c(\Sigma_E)$,
\begin{equation}\label{eq:equivalent}
\int_{\Sigma_E}a d\mu=\int_{\Sigma_E}\overline{a}\big( i_{\Sigma_E}(x,\xi)\big) d \mu(x,\xi)=\int_{\overline{\Sigma_E}}\overline{a} d \overline{\mu},
\end{equation}
where $\overline{a} \in C(\overline{\Sigma_E})$ is defined in \eqref{eq:barr}, and $\overline{\mu} \coloneqq (i_{\Sigma_E})_*\mu$. Now we view this integral as an integral over oriented regularized Kepler orbits. This space is
$\mathcal{H}(\overline{\Sigma_E}) \coloneqq \overline{\Sigma_E}/\sim$ where we quotient out by points on the same regularized Kepler orbit (see Remark \ref{rem:bars}). By \eqref{eq:reghamil},
$$\mathcal{H}(\overline{\Sigma_E}) \cong T_1^*\mathbb{S}^3/ \mathbb{S}^1=\operatorname{SO}(4)/(\operatorname{SO}(2) \times \operatorname{SO}(2))=\widetilde{\textbf{Gr}}(2,4), $$
where $\widetilde{\textbf{Gr}}(2,4)$ is the oriented Grassmanian
manifold (i.e. the double cover of $\textbf{Gr}(2,4)$). That is,
the space of regularized Kepler orbits on $\overline{\Sigma_E}$ is the same as the space of
geodesics on $\mathbb{S}^3$. In particular, the space of regularized Kepler orbits is a
compact manifold. If we denote $\pi:\overline{\Sigma_E} \to \mathcal{H}(\overline{\Sigma_E})$
the projection, then the disintegration theorem says
\begin{equation}\label{eq:disint}
\int_{\overline{\Sigma_E}}\overline{a}d \overline{\mu}= \int_{\mathcal{H}(\overline{\Sigma_E})}\bigg(\int_{\pi^{-1}(\overline{\gamma})}\overline{a}d\nu_{\overline{\gamma}}\bigg) d(\pi_{*}\overline{\mu}) (\overline{\gamma}),
\end{equation}
where $\nu_{\overline{\gamma}}$ are
probability measures on $\overline{\Sigma_E}$ such that
$\operatorname{supp}\nu_{\overline{\gamma}} \subseteq \pi^{-1}(\overline{\gamma})$ for
$\pi_*\overline{\mu}$-almost all $\overline{\gamma} \in \mathcal{H}(\overline{\Sigma_E})$ (see \cite[III-70]{DM78} for the disintegration theorem). Note that
\eqref{eq:disint} is true for merely $\overline{\mu}$-integrable $\overline{a}$ (in particular, indicator functions supported on orbits), so
since $\overline{\mu}$ is invariant under the regularized Hamiltonian flow (by assumption), we see that $\nu_{\overline{\gamma}}$ is invariant under the regularized Kepler
flow for $\pi_*\overline{\mu}$-almost all $\overline{\gamma}\in \mathcal{H}(\overline{\Sigma_E})$. Then for $\pi_*\overline{\mu}$-almost all $\overline{\gamma}$,
$$\int_{\pi^{-1}(\overline{\gamma})}\overline{a} d\nu_{\overline{\gamma}} =\mathcal{R}[\overline{a}](\overline{\gamma})\coloneqq \frac{p_0^3}{2\pi}\int_{0}^{2\pi/p_0^3}\overline{a}(\overline{\gamma}(t))dt,$$
where
$\mathcal{R}[\overline{a}]\in C(\mathcal{H}(\overline{\Sigma_E}))$ is
the Radon transform. With
\eqref{eq:disint}, this implies
\begin{equation}\label{eq:disint2}
\int_{\overline{\Sigma_E}}\overline{a} d \overline{\mu}= \int_{\mathcal{H}(\overline{\Sigma_E})} \mathcal{R}[\overline{a}](\overline{\gamma}) d (\pi_*\overline{\mu})(\overline{\gamma}).
\end{equation}
On the other hand, for any $\overline{\gamma_0} \in \mathcal{H}(\overline{\Sigma_E})$, Theorem \ref{theo:1} and \eqref{eq:seelater} give
\begin{equation}\label{eq:steppingstone}
\langle \operatorname{Op}_{\hbar}(a)\Psi_{\hbar,N}^{\gamma_{0}},\Psi_{\hbar,N}^{\gamma_0} \rangle \to \mathcal{R}[\overline{a}](\overline{\gamma_0})=\delta_{\overline{\gamma_0}}\big[\mathcal{R}[\overline{a}]\big] \coloneqq \int_{\mathcal{H}(\overline{\Sigma_E})}\mathcal{R}[\overline{a}](\overline{\gamma}) d\delta_{\overline{\gamma_0}}(\overline{\gamma}).
\end{equation}
Now we would like to show the analogous statement to \eqref{eq:steppingstone} for convex
combinations of delta masses. Let $c_1,\ldots,c_n \in [0,1]$ be
such that $c_j>0$ and $\sum c_j=1$. Let
$\overline{\gamma_1},\ldots,\overline{\gamma_n} \in \mathcal{H}(\overline{\Sigma_E})$ be distinct
regularized Kepler orbits. Then consider $\Psi_{\hbar,N} \coloneqq
\sqrt{c_1}\Psi_{\hbar,N}^{\gamma_1}+\cdots+\sqrt{c_n}
\Psi_{\hbar,N}^{\gamma_n}$. We claim
\begin{equation}\label{eq:convex}
\langle \operatorname{Op}_{\hbar}(a)\Psi_{\hbar,N},\Psi_{\hbar,N} \rangle\to \sum_jc_j\delta_{\overline{\gamma_j}}(\mathcal{R}[\overline{a}]).
\end{equation}
Indeed, this follows immediately from \eqref{eq:steppingstone}
and the fact that $\langle
\operatorname{Op}_{\hbar}(a)\Psi_{\hbar,N}^{\gamma_j},\Psi_{\hbar,N}^{\gamma_k}
\rangle\to 0$ for $j \neq k$, which we prove after this argument in Lemma \ref{lem:lem} (the Coulomb analog of Lemma 2.1 in \cite{TV-B97}). It is well-known (by the Krein-Milman theorem) that convex combinations of delta measures are weak-* dense in the compact, convex set of probability measures on $\mathcal{H} (\overline{\Sigma_E})$ (equipped with the weak-* topology). We can find eigenfunctions whose semiclassical limit coincides with any given convex combination of delta measures applied to $\mathcal{R}[\overline{a}]$ by \eqref{eq:convex}, so we are done by \eqref{eq:disint2}.
\begin{lem}\label{lem:lem} For $a \in C_c^{\infty}(T^*\mathbb{R}^3)$ and $\overline{\gamma}\neq  \overline{\gamma'} \in \mathcal{H}(\overline{\Sigma_E})$, we
have $\langle
\operatorname{Op}_{\hbar}(a)\Psi_{\hbar,N}^{\gamma},\Psi_{\hbar,N}^{\gamma'}
\rangle \to 0$ as $\hbar \to 0,N \to \infty$ while $E_N(\hbar) \to E$.
\end{lem}
The argument is the same given in \cite[Proposition 3.1]{G91}. We include a proof for completeness.
\begin{proof}
Let $\chi ,\chi'\in C_c^{\infty}(T^*\mathbb{R}^3,[0,1])$ be such that
\begin{align*}
&\chi+\chi'=1 \text{ in a neighborhood of }\operatorname{supp}
  a,\\
 &\operatorname{supp} \chi \cap \gamma' = \varnothing,\quad  \operatorname{supp} \chi'\cap \gamma  = \varnothing.
\end{align*}
We have
$\operatorname{Op}_{\hbar}(a\chi)+\operatorname{Op}_{\hbar}(a\chi')=\operatorname{Op}_{\hbar}(a)$. Using basic microlocal analysis tools (see \cite[(4.1.12), Theorem 4.24]{Z12}) and the Cauchy-Schwarz inequality,
\begin{align*}
|\langle
\operatorname{Op}_{\hbar}(a)\Psi_{\hbar,N}^{\gamma},\Psi_{\hbar,N}^{\gamma'}
\rangle|&= |\langle
\operatorname{Op}_{\hbar}(a\chi')\Psi_{\hbar,N}^{\gamma},\Psi_{\hbar,N}^{\gamma'}
\rangle+\langle
\Psi_{\hbar,N}^{\gamma},\operatorname{Op}_{\hbar}(a\chi)^*\Psi_{\hbar,N}^{\gamma'}
          \rangle|\\
  &\leq \lVert
    \operatorname{Op}_{\hbar}(a\chi')\Psi_{\hbar,N}^{\gamma}
    \rVert_{L^2}^2+\lVert
    \operatorname{Op}_{\hbar}(a^*\chi)\Psi_{\hbar,N}^{\gamma'}
    \rVert_{L^2}^2,\\
        &= \langle
\operatorname{Op}_{\hbar}(|a\chi'|^2)\Psi_{\hbar,N}^{\gamma},\Psi_{\hbar,N}^{\gamma}
\rangle+\langle
\operatorname{Op}_{\hbar}(|a\chi|^2)\Psi_{\hbar,N}^{\gamma'},\Psi_{\hbar,N}^{\gamma'}
\rangle+O(\hbar).
\end{align*}
Taking the limit on both sides and invoking Theorem \ref{theo:1} yields the result.

\end{proof}

\bibliographystyle{alpha-reverse}
\bibliography{refs}

\end{document}